\newtheorem{theorem}{Theorem}[section] 
\newtheorem{lemma}[theorem]{Lemma}
\newtheorem{proposition}[theorem]{Proposition}
\newtheorem{corollary}[theorem]{Corollary}
\theoremstyle{definition}
\newtheorem{definition}[theorem]{Definition}
\newtheorem{question}[theorem]{Question}
\theoremstyle{remark}
\numberwithin{equation}{section}
\newcommand{\CC}{\mathbb{C}}
\newcommand{\DD}{\mathbb{D}}
\newcommand{\NN}{\mathbb{N}}
\newcommand{\RR}{\mathbb{R}}
\newcommand{\TT}{\mathbb{T}}
\newcommand{\ZZ}{\mathbb{Z}}
\DeclareMathOperator{\supp}{supp}
\DeclareMathOperator{\re}{Re}
\DeclareMathOperator{\im}{Im}
\title{Fourier series of circle embeddings}
\author{Leonid V. Kovalev}
\address{215 Carnegie, Mathematics Department, Syracuse University, Syracuse, NY 13244, USA}
\email{lvkovale@syr.edu}
\thanks{L.V.K. was supported by the National Science Foundation grants DMS-1362453 and DMS-1764266.}
\author{Xuerui Yang}
\address{215 Carnegie, Mathematics Department, Syracuse University, Syracuse, NY 13244, USA}
\email{xyang20@syr.edu}
\thanks{X.Y. was supported by an Undergraduate Summer Research Award from Syracuse University.}
\subjclass[2010]{Primary 31A05; Secondary 30J10, 42A16}
\keywords{Circle homeomorphisms, Fourier series, circle embeddings, harmonic maps, Blaschke products}
\begin{document}
\baselineskip6mm
\maketitle
\begin{abstract}
We study the Fourier series of circle homeomorphisms and circle embeddings, with the emphasis on Blaschke product approximation and the vanishing of Fourier coefficients. The analytic properties of the Fourier series are related to the geometry of the circle embeddings, and have implications for the curvature of minimal surfaces.  
\end{abstract}
\section{Introduction}   

This paper concerns sense-preserving embeddings $f$ of the unit circle $\TT=\{z\in\CC\colon |z|=1\}$ into the complex plane $\CC$, with the emphasis on the relation between the geometry of the image $f(\TT)$ and the behavior of the Fourier coefficients $\hat f\colon \ZZ\to\CC$. The main questions addressed here are: how often does the Fourier series of a circle homeomorphism ($f(\TT)=\TT$) terminate in either positive or negative direction? And does a circle embedding have to have nonzero Fourier coefficients in some fixed finite subset of $\ZZ$? 

Our main results are Theorem~\ref{HSC1}, Theorem~\ref{avoidable}, and Theorem~\ref{horconvex}. Theorem~\ref{HSC1} shows that functions of the form $\arg (B(\zeta)/\zeta^n)$ are dense in $C^1(\TT)$, improving a $C^0$-approximation result due to Helson and Sarason~\cite{HelsonSarason}. As a corollary, circle diffeomorphisms of the form $\zeta\mapsto B(\zeta)/\zeta^n$ are uniformly dense among all sense-preserving circle homeomorphisms (Corollary~\ref{dense}). Theorem~\ref{avoidable}
shows that circle embeddings can have arbitrarily long gaps at the beginning of their Fourier series. In the opposite direction, Theorem~\ref{horconvex} provides a Heinz-type estimate $|\hat f(-1)| + |\hat f(1)|\ge c>0$ (cf.~\cite{Heinz52}) for circle embeddings with a horizontally convex image.

The properties of the Fourier coefficients $\hat f(n)$ of circle embeddings $f$ are of interest for multiple reasons. First, $\hat f(n)$ are also the Taylor coefficients of the harmonic extension of $f$ to the unit disk $\DD$. This implies, for example, that the inequality $|\hat f(1)| \le |\hat f(-1)|$ is an obstruction to the injectivity of this extension, by a theorem of Lewy~\cite{Duren}*{p. 20}. 

When $f(\TT)=\TT$, a lower bound on $|\hat f(1)|^2 + |\hat f(-1)|^2$ yields an upper bound on the Gaussian curvature of a minimal graph over the unit disk. This can be viewed as a quantitative form of the Bernstein theorem on minimal surfaces: every minimal graph over $\RR^2$ is a plane.  This relation motivated the Heinz inequality~\cite{Heinz52} with several subsequent improvements until the sharp form was achieved by Hall~\cite{Hall}. The optimal Gaussian curvature bound remains conjectural~\cite{Duren}*{Conjecture 10.3.2}.

Finally, the fact that $|\hat f(1)| > |\hat f(-1)|$ for every circle homeomorphism, in its quantitative form, is a key to the conformally natural extension of circle homeomorphisms devised by Douady and Earle~\cite{DouadyEarle}.

The paper is organized as follows. Sections \ref{sec:terminate} and ~\ref{sec:approximate} concern the circle diffeomorphisms whose Fourier series terminates in one direction; they are closely related to Blaschke products. In Section~\ref{sec:vanishing} we consider the circle embeddings that lack low-frequency Fourier terms. Section~\ref{sec:prelim} collects the necessary background results.

\section{Preliminaries}\label{sec:prelim}

An \textit{embedding} is a map that is a homeomorphism onto its image. For maps $f\colon \TT\to \CC$ this property is equivalent to being continuous and injective. All circle embeddings considered in this paper are sense-preserving. In the special case $f(\TT)=\TT$ we have a circle homeomorphism.

The Fourier coefficients of an integrable function $f\colon \TT\to\CC$ are given by
\[
\hat f(n) = \frac1{2\pi}\int_{0}^{2\pi} f(e^{i\theta})e^{-in\theta}\,d\theta.
\]
We write $\supp \hat f = \{n\in\ZZ\colon \hat f(n) \ne 0\}$. 
The Lebesgue space $L^2(\TT)$ and the sequence space $\ell^2(\ZZ)$ are equipped with inner products
\[
\left< f, g\right>_{L^2(\TT)} = 
\frac{1}{2\pi}\int_0^{2\pi} f(e^{i\theta})\overline{g(e^{i\theta})}\,d\theta
\]
and 
\[
\left<a, b\right>_{\ell^2(\ZZ)}  = \sum_{n\in\ZZ} a_n \overline{b_n}.
\]
Parseval's theorem asserts that $f\mapsto \hat f$ is an isomorphism, with
\begin{equation}\label{Parseval}
    \left< f, g\right>_{L^2(\TT)} = \left< \hat f, \hat g\right>_{\ell^2(\ZZ)}.
\end{equation}

\begin{corollary}\label{ONB} For every measurable function $f\colon \TT\to \TT$ the set of shifted Fourier coefficients $\{\hat f(\cdot -k) \colon k\in\ZZ\}$ is an orthonormal basis of $\ell^2(\ZZ)$.  
\end{corollary}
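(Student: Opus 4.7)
The plan is to transfer a well-known orthonormal basis of $L^2(\TT)$ across the unitary Fourier transform. The key observation is that, although $f$ is only assumed to be measurable into $\TT$, the condition $|f|=1$ almost everywhere forces $f\in L^\infty(\TT)\subset L^2(\TT)$, and more importantly makes the multiplication operator $M_f\colon g\mapsto fg$ a unitary operator on $L^2(\TT)$ (it is clearly linear and isometric since $|f|=1$ a.e., and its inverse is multiplication by $\overline f$).

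Next I would identify the shifted sequence $\hat f(\,\cdot - k)$ with the Fourier coefficients of a familiar function. A direct calculation from the definition,
\[
\frac{1}{2\pi}\int_0^{2\pi} e^{ik\theta}f(e^{i\theta})\,e^{-in\theta}\,d\theta = \hat f(n-k),
\]
shows that the sequence $\{\hat f(n-k)\}_{n\in\ZZ}$ is precisely the Fourier transform of the function $\zeta\mapsto \zeta^k f(\zeta) = M_f(\zeta^k)$.

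Now I combine the two ingredients. The pure exponentials $\{\zeta^k\colon k\in\ZZ\}$ form an orthonormal basis of $L^2(\TT)$, and $M_f$ is unitary, so $\{M_f(\zeta^k)\}_{k\in\ZZ}$ is again an orthonormal basis of $L^2(\TT)$. Parseval's identity \eqref{Parseval} says that the map $g\mapsto \hat g$ is a unitary isomorphism from $L^2(\TT)$ onto $\ell^2(\ZZ)$, and a unitary isomorphism sends orthonormal bases to orthonormal bases. Applying this to the basis $\{M_f(\zeta^k)\}$ yields exactly the claim that $\{\hat f(\,\cdot - k)\colon k\in\ZZ\}$ is an orthonormal basis of $\ell^2(\ZZ)$.

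I do not expect any genuine obstacle here; the only points requiring a moment of care are the justification that $M_f$ is unitary (one must remember that $|f|=1$ a.e.\ gives both isometry and surjectivity via $M_{\overline f}$) and the bookkeeping sign in the shift, ensuring that the Fourier coefficients of $\zeta^k f(\zeta)$ come out as $\hat f(n-k)$ rather than $\hat f(n+k)$. Once those are in place, the statement is immediate from Parseval.
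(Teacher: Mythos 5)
Your proposal is correct and follows the same route as the paper: identify $\hat f(\cdot-k)$ with the Fourier coefficients of $\zeta^k f(\zeta)$, observe that multiplication by $f$ is unitary on $L^2(\TT)$ because $|f|=1$ a.e., and transfer the orthonormal basis $\{\zeta^k\}$ through Parseval. Your write-up simply makes explicit the details the paper leaves to the reader.
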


\begin{proof} By virtue of~\eqref{Parseval}, the statement is equivalent to $\{e^{i k \theta} f(e^{i\theta}) : k\in\ZZ \}$ being an orthonormal basis of $L^2(\TT)$. The latter follows from the multiplication map $g \mapsto fg$ being a unitary operator on $L^2(\TT)$.  
\end{proof}

A complex-valued function is called harmonic if its real and imaginary parts are harmonic. For any continuous function on $\TT$ the Poisson integral provides a continuous extension $F\colon \overline{\DD}\to\CC$, which is harmonic in $\DD$~\cite{Ahlfors}*{p. 169}. The continuity allow us to relate the Taylor coefficients of $F$ to the Fourier coefficients of $f$, as 
\[
\lim_{r\nearrow 1}\frac{1}{2\pi}\int_{0}^{2\pi} F(re^{i\theta})e^{-in\theta}\,d\theta =  \frac{1}{2\pi}\int_{0}^{2\pi} f(e^{i\theta})e^{-in\theta}\,d\theta = \hat f(n). 
\]
Let us record this as a proposition.
\begin{proposition}\label{continuousBoundary} If $f\colon \TT\to\CC$ is continuous, then the series
\[
F(z) = \sum_{n=0}^\infty \hat f(n)z^n + \sum_{n=1}^\infty \hat f(-n)\bar z^n
\]
defines a harmonic function in $\DD$, for which $f$ provides a continuous boundary extension.
\end{proposition}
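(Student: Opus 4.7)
The plan is to identify the series defining $F$ with the Poisson integral of $f$; after that, the proposition reduces to the classical extension theorem cited from Ahlfors.

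First I would check that the series converges to a harmonic function on $\DD$. Since $f$ is continuous on the compact set $\TT$, it is bounded, so $|\hat f(n)|\le \|f\|_\infty$ for every $n\in\ZZ$. On any closed subdisk $\{|z|\le r_0\}$ with $r_0<1$, the two series are majorized by a convergent geometric series in $r_0$, giving absolute and uniform convergence on compact subsets of $\DD$. Each partial sum is the sum of an analytic polynomial and an antianalytic polynomial, hence harmonic; the locally uniform limit of harmonic functions is harmonic, so $F$ is harmonic in $\DD$.

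Next I would match $F$ with the Poisson integral $P[f]$. Using the Fourier expansion of the Poisson kernel
\[
P_r(\theta)=\sum_{n\in\ZZ}r^{|n|}e^{in\theta},
\]
which converges uniformly in $\theta$ for each fixed $r<1$, I would interchange sum and integral in
\[
P[f](re^{i\varphi})=\frac{1}{2\pi}\int_0^{2\pi}f(e^{i\theta})P_r(\varphi-\theta)\,d\theta
\]
to obtain $P[f](re^{i\varphi})=\sum_{n\in\ZZ}\hat f(n)r^{|n|}e^{in\varphi}=F(re^{i\varphi})$. This interchange is the only step requiring care, and it is justified by the boundedness of $f$ together with the uniform convergence of the kernel expansion.

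With the identification $F=P[f]$ in hand, the continuous boundary extension is immediate from the cited result of Ahlfors: the Poisson integral of a continuous function on $\TT$ extends continuously to $\overline{\DD}$ with boundary values $f$. There is no substantive obstacle here; the argument is essentially a bookkeeping translation between two standard representations of the same harmonic extension.
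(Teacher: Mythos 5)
Your proposal is correct and follows essentially the same route as the paper: both identify the series with the Poisson integral of $f$ and then invoke the classical fact that the Poisson integral of a continuous boundary function extends continuously to $\overline{\DD}$. The only cosmetic difference is the direction of the identification (you expand the Poisson kernel and swap sum and integral, while the paper computes the coefficients of the Poisson extension by letting $r\nearrow 1$), and both interchanges are justified by the same uniform-convergence considerations.
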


If $f\colon \TT\to\CC\setminus \{0\}$ is a continuous function, we denote by $\Delta_\TT \arg f$ the change of the continuous argument of $f(z)$ as $z$ travels around $\TT$ once in the positive direction. Note that $\Delta_\TT \arg f$ is the winding number of $f$ around $0$, multiplied by $2\pi$.

A (finite) \textit{Blaschke product} is a function of the form
\[
B(z) = \sigma\prod_{k=1}^n \frac{z-z_k}{1-\overline{z_k}z}
\]
where $z_1,\dots, z_n\in \DD$ and $\sigma\in\TT$. Note that $\Delta_\TT \arg B = 2\pi n$. 
We refer to $n$ as the degree of $B$. A Blaschke product of degree $1$ is a \textit{M\"obius transformation}. All Blaschke products in this paper are finite. The recent book~\cite{Blaschke} is an excellent reference on such products. We will often use the identity~\cite{Blaschke}*{(3.4.7)}, which states that 
\begin{equation}\label{derivativeB}
    \zeta \frac{B'(\zeta)}{B(\zeta)} = \sum_{k=1}^n \frac{1-|z_k|^2}{|\zeta -z_k|^2} =  \sum_{k=1}^n P_\DD(z_k, \zeta)
    \quad (\zeta\in \TT)
\end{equation}
where $P_\DD$ is the Poisson kernel for $\DD$~\cite{Ransford}*{p. 8},
\[
P_\DD(z, \zeta) = \re\left(\frac{\zeta+z}{\zeta-z}\right)
= \frac{1-|z|^2}{|\zeta-z|^2}.
\]
Note that for $\zeta\in\TT$ the quantity $\zeta B'(\zeta)/B(\zeta)$ is the derivative of $\arg B(\zeta)$  with respect to $\arg \zeta$. This derivative can be used for the following elementary characterization of circle homeomorphisms and diffeomorphisms.

\begin{proposition}\label{derivativeF}
Suppose that $f\colon \TT\to \TT$ is a $C^1$-smooth map. Let $g(e^{i \theta}) = \frac{d \arg f(e^{i\theta})}{d \theta}$ be its derivative. Then 
\begin{enumerate}[(a)]
    \item $f$ is a homeomorphism if and only if $\Delta_\TT \arg f = 2\pi$,  $g\ge 0$ on $ \TT$ and $g$ is not identically zero on any nontrivial subarc of $\TT$;
    \item $f$ is a diffeomorphism if and only if $\Delta_\TT \arg f = 2\pi$ and $g >  0$ on $\TT$.
\end{enumerate}
\end{proposition}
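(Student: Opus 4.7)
The plan is to reduce both parts to a routine analysis of a lifted argument function. Since $f\colon\TT\to\TT$ is continuous, I fix a continuous lift $\phi\colon\RR\to\RR$ with $f(e^{i\theta})=e^{i\phi(\theta)}$; the $C^1$ hypothesis on $f$ makes $\phi$ also $C^1$, with $\phi'(\theta)=g(e^{i\theta})$, and by construction $\Delta_\TT\arg f=\phi(2\pi)-\phi(0)$. Both parts then become statements about $C^1$ monotone real-valued functions on an interval with a prescribed endpoint gap.

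For (a), I would first establish necessity: a sense-preserving circle homeomorphism lifts to a strictly increasing homeomorphism of $\RR$ satisfying $\phi(\theta+2\pi)=\phi(\theta)+2\pi$, so $\phi(2\pi)-\phi(0)=2\pi$, $g=\phi'\ge 0$, and $\phi$ is not constant on any subinterval (equivalently, $g$ is not identically zero on any subarc). For sufficiency, $g\ge 0$ makes $\phi$ nondecreasing, the no-identically-zero-subarc hypothesis upgrades this to strictly increasing, and the endpoint gap $2\pi$ combined with strict monotonicity turns $\theta\mapsto e^{i\phi(\theta)}$ into a continuous bijection from $\TT$ onto itself, which is a homeomorphism by compactness of $\TT$.

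For (b), I would leverage part (a). In the parametrization $f(e^{i\theta})=e^{i\phi(\theta)}$ one computes $(d/d\theta)f(e^{i\theta})=i\phi'(\theta)e^{i\phi(\theta)}$, so the circle derivative of $f$ vanishes exactly where $\phi'$ does; hence ``$f$ is a diffeomorphism'' forces $\phi'$ to be nowhere zero, and together with $g\ge 0$ from (a) this yields $g>0$. Conversely, $g>0$ implies the hypotheses of (a), so $f$ is a homeomorphism, and the inverse function theorem applied to $\phi$ then supplies a $C^1$ inverse. No step is a genuine obstacle; the only care-point is the upgrade from nondecreasing to strictly increasing in (a), which follows because $\phi(a)=\phi(b)$ with $a<b$ and monotonicity force $\phi$ to be constant on $[a,b]$. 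I would isolate this as a single explanatory line in the write-up.
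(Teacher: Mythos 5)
Your argument is correct. The paper offers no proof of this proposition at all---it is stated as an ``elementary characterization'' and used as a black box---so there is no authorial argument to compare against; your lifting argument via a continuous (hence $C^1$) branch $\phi$ of $\arg f(e^{i\theta})$ is the standard justification and the one the authors evidently have in mind. The two care-points you flag are indeed the only ones: the upgrade from nondecreasing to strictly increasing via the no-vanishing-on-a-subarc hypothesis, and the fact that the endpoint gap $\phi(2\pi)-\phi(0)=2\pi$ together with strict monotonicity gives injectivity of $\theta\mapsto e^{i\phi(\theta)}$ on $[0,2\pi)$; compactness of $\TT$ then finishes (a), and the inverse function theorem finishes (b). Note only that, per the paper's standing convention, ``homeomorphism'' here means sense-preserving homeomorphism, which is why $\Delta_\TT\arg f=+2\pi$ rather than $\pm 2\pi$ in the necessity direction.
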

The assumption of $g$ not vanishing on any subarc is assured to hold if $f$ is real-analytic. 

\section{Circle homeomorphisms with a terminating Fourier series}\label{sec:terminate}

In this section $f\colon \TT\to\TT$ is a sense-preserving homeomorphism. Our goal is to identify all such homeomorphisms for which $\supp \hat f$ is bounded from above or from below. It is a simple observation that  $\supp \hat f$ can be bounded from both sides only if it is exactly $\{1\}$.

\begin{proposition}\label{suppbounded} 
If $f$ is a circle homeomorphism with $\supp \hat f$ bounded, then $f$ is a rotation.
\end{proposition}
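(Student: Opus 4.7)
The assumption that $\supp \hat f$ is finite means $f$ is a trigonometric polynomial; write $N_1 = \min \supp \hat f$ and $N_2 = \max \supp \hat f$, both of which are finite integers. The idea is to exploit the fact that $f$ takes values in $\TT$, which gives a strong rigidity constraint on the Fourier coefficients, and then use that $f$ is a sense-preserving homeomorphism to pin down the leftover unimodular monomial.

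The cleanest route is through Corollary~\ref{ONB}. Orthonormality of $\{\hat f(\cdot - k)\}_{k\in\ZZ}$ in $\ell^2(\ZZ)$ translates (for $k\ne 0$) to the autocorrelation identity
\[
\sum_{n\in\ZZ} \hat f(n)\,\overline{\hat f(n+k)} = 0, \qquad k\ne 0.
\]
Specializing to $k = N_2 - N_1$, the only index $n$ for which both factors can be nonzero is $n = N_1$, and that term equals $\hat f(N_1)\,\overline{\hat f(N_2)}$. By definition of $N_1$ and $N_2$, both factors are nonzero, so the only way to satisfy the identity is $N_1 = N_2$. Thus $\hat f$ is supported at a single integer $N$, i.e.\ $f(\zeta) = a\zeta^N$ for some $a\in\CC\setminus\{0\}$.

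Finally, $|f|\equiv 1$ on $\TT$ forces $|a| = 1$, so $f(\zeta) = a\zeta^N$ with $a\in\TT$. For $f$ to be a bijection of $\TT$ we need $|N| = 1$, and the sense-preserving hypothesis rules out $N = -1$, leaving $N = 1$. Hence $f$ is the rotation $\zeta\mapsto a\zeta$. The only step that requires any thought is the first reduction; once the single-point support is established, the conclusion follows immediately from elementary considerations about monomial maps of $\TT$.
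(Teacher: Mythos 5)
Your proof is correct and follows essentially the same route as the paper: both use Corollary~\ref{ONB} to show that orthogonality of two distinct shifts of $\hat f$ forces the support to collapse to a single integer, and then identify the monomial as a rotation using injectivity and sense-preservation. The only difference is cosmetic (your autocorrelation indexing versus the paper's inner-product notation), so nothing further is needed.
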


\begin{proof} Let $a = \min \supp \hat f$ and $b=\max\supp \hat f$. If $a<b$, then 
\[
\left<\hat f(a + \cdot), \hat f(b - \cdot) \right>_{\ell^2(\ZZ)} = \hat f(a) \overline{\hat f(b) } \ne 0
\]
contradicting Corollary~\ref{ONB}. Hence $a=b$, which means $f(z) = cz^a$ for some constant $c$. This map is a homeomorphism only when  $a=1$, i.e.,  $f$ is a rotation. \end{proof}

In contrast to the above, there are rich families of circle homeomorphisms whose Fourier series terminates in one direction only.

\begin{proposition}\label{boundedFourier}
Suppose $f\colon \TT\to\TT$ is a circle homeomorphism. Then 
\begin{enumerate}[(a)]
\item $\supp f$ is bounded below if and only if $f(\zeta) = B(\zeta)/\zeta^n$ for some $n\in \NN\cup \{0\}$, where $B$ is a Blaschke product of degree $n+1$;
\item $\supp f$ is bounded above if and only if $f(\zeta) = \zeta^n/B(\zeta)$ for some $n\in \NN$, where $B$ is a Blaschke product of degree $n-1$.
\end{enumerate}
\end{proposition}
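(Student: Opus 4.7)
The plan is to convert the one-sided Fourier support condition into an analyticity statement on $\DD$ via Proposition~\ref{continuousBoundary}, and then to identify the analytic extension as a finite Blaschke product using the classical fact that a function holomorphic on $\DD$, continuous on $\overline{\DD}$, and unimodular on $\TT$ is a finite Blaschke product.

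To prove (a), set $a = \min \supp \hat f$ and $n = \max(0,-a) \in \NN\cup\{0\}$. Then $g(\zeta) = \zeta^n f(\zeta)$ has Fourier coefficients $\hat g(k) = \hat f(k-n)$, so $\supp \hat g \subset \NN\cup\{0\}$. By Proposition~\ref{continuousBoundary}, $g$ admits a continuous extension $G$ to $\overline{\DD}$ which is holomorphic on $\DD$, and $|G(\zeta)| = |\zeta^n f(\zeta)| = 1$ for $\zeta\in\TT$. A standard argument---divide $G$ by the Blaschke product formed from its finitely many zeros in $\DD$, then apply the maximum principle to the resulting zero-free function and its reciprocal---shows that $G$ is itself a finite Blaschke product $B$. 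By additivity of winding numbers, $\Delta_\TT \arg B = 2\pi n + \Delta_\TT \arg f = 2\pi(n+1)$ (using $\Delta_\TT \arg f = 2\pi$ since $f$ is a sense-preserving homeomorphism of $\TT$), so $\deg B = n+1$. The converse is immediate from the Taylor expansion of $B$ at the origin, which shows $B(\zeta)/\zeta^n$ has Fourier support contained in $\{-n,-n+1,\ldots\}$.

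Part (b) reduces to the same machinery by conjugation. Setting $h(\zeta) = \overline{f(\zeta)}$ on $\TT$ gives $\hat h(k) = \overline{\hat f(-k)}$, so $\supp \hat h$ is bounded below precisely when $\supp \hat f$ is bounded above. The map $h$ has winding number $-1$ around $0$, and running the argument from (a) on $h$ produces $\zeta^m h(\zeta) = B(\zeta)$ for a finite Blaschke product $B$ of degree $m - 1$, with the lower bound $m\ge 1$ forced by $\deg B\ge 0$. Conjugating and using $\overline{B(\zeta)} = 1/B(\zeta)$ on $\TT$ yields $f(\zeta) = \zeta^m / B(\zeta)$ as claimed. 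The main obstacle is the Blaschke-product identification at the start; everything else is careful bookkeeping of winding numbers to pin down the correct exponent and the correct degree in each case.
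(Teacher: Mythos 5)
Your proposal is correct and follows essentially the same route as the paper: multiply by $\zeta^n$ to kill the negative frequencies, extend holomorphically via Proposition~\ref{continuousBoundary}, identify the unimodular extension as a finite Blaschke product, and pin down the degree by a winding-number count; part (b) via $\overline{f}$ is the same as the paper's choice $g(\zeta)=\zeta^n\overline{f(\zeta)}$. No gaps.
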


\begin{proof} The ``if'' part is clear in both cases: the Fourier series of $B(\zeta)/\zeta^n$ is supported on $\ZZ\cap [-n, \infty)$, while the Fourier series of $\zeta^n/B(\zeta) = \zeta^n \overline{B(\zeta)}$ is supported on $\ZZ\cap (-\infty, n]$. We proceed to ``only if''.

(a) Let $n = \max(0, -\min\supp\hat f)$ and $g(\zeta) = \zeta^{n}f(\zeta)$. By construction $\hat g(k) = 0$ for all $k<0$. By Proposition~\ref{continuousBoundary} the function $g$ has a holomorphic extension to $\DD$ which is continuous on $\overline{\DD}$. Since $|g|\equiv 1$ on $\TT$, it follows that $g$ is a Blaschke product~\cite{Blaschke}*{Theorem 3.5.2}. Let $d$ be the degree of $g$. It follows that $\Delta_\TT \arg g = 2\pi d$. On the other hand,  $\Delta_\TT \arg f = 2\pi$ because $f$ is a homeomorphism. Hence $\Delta_\TT \arg(\zeta^n) =  2\pi (d-1)$, which means $n=d-1$. Part (a) is proved. 

To prove (b), let $n = \max(0, \max\supp\hat f)$ and $g(\zeta) = \zeta^n \overline{f(\zeta)} = \zeta^{n}/f(\zeta)$. By construction $\hat g(k) = 0$ for all $k<0$. As in part (a) we conclude that $g$ is a Blaschke product of some degree $d$. This time, $\Delta_\TT \arg g = 2\pi d$ together with the relation $g(\zeta) =  \zeta^{n}/f(\zeta)$ yield $d = n-1$, completing the proof. 
\end{proof}

Proposition~\ref{boundedFourier} does not yet establish the above claim about having rich families of circle homeomorphisms, because not every quotient of the form stated in Proposition~\ref{boundedFourier} is a homeomorphism of $\TT$. As a warm-up, let us consider the special case $n=2$ of Proposition~\ref{boundedFourier} (b), that is 
\begin{equation}\label{z^2/B}
f(\zeta) = \sigma  \zeta^2 \frac{1-\overline{z_1} \zeta}{\zeta-z_1}  \quad (z_1\in\DD, \ \sigma\in \TT)   
\end{equation}
Letting $B(\zeta) = (\zeta-z_1)/(1-\overline{z_1}\zeta)$, we obtain from ~\eqref{derivativeB} that
\begin{equation}\label{z^2/Bder}
\frac{\zeta f'(\zeta)}{f(\zeta)} = 2 -  \frac{\zeta B'(\zeta)}{B(\zeta)} = 2 - \frac{1-|z_1|^2}{|\zeta-z_1|^2}     
\quad (\zeta\in\TT)
\end{equation}
The expression ~\eqref{z^2/Bder} is minimized when $|\zeta-z_1| = 1-|z_1|$, and thus its minimum value is 
\[
2  - \frac{1+|z_1|}{1-|z_1|} = \frac{1-3|z_1|}{1-|z_1|}.
\]
By Proposition~\ref{derivativeF}, the function ~\eqref{z^2/B} is a circle homeomorphism if and only if $|z_1|\le 1/3$, and is a diffeomorphism if and only if   $|z_1| < 1/3$. 

To treat both cases of Proposition~\ref{boundedFourier} in a unified way, let us consider the quotients of two Blaschke products. These are precisely the rational functions that map $\TT$ to itself. 

\begin{lemma}\label{Bquotient}
Suppose $B_1$ and $B_2$ are finite Blaschke products, that is
\begin{equation} \label{Bquotient1}
B_1(z) = \sigma_1\prod_{k=1}^n \frac{z-z_k}{1-\overline{z_k}z},
\quad
B_2(z) = \sigma_2\prod_{k=1}^m \frac{z-w_k}{1-\overline{w_k}z}
\end{equation}
where $z_1,\dots, z_n, w_1, \dots, w_m\in \DD$ and $\sigma_1,\sigma_2\in\TT$. The quotient $f= B_1/B_2$ is a circle homeomorphism if and only if $n-m=1$ and 
\begin{equation}\label{Poissoncondition}
    \sum_{k=1}^n P_\DD(z_k, \zeta) \ge \sum_{k=1}^m P_\DD(w_k, \zeta)\quad \text{for all } \zeta\in\TT
\end{equation}
where $P_\DD$ is the Poisson kernel. 
If, additionally, strict inequality holds in~\eqref{Poissoncondition} for all $\zeta\in\TT$, then $f$ is a circle diffeomorphism.
\end{lemma}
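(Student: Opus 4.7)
The plan is to reduce everything to Proposition~\ref{derivativeF} by computing the argument derivative of $f = B_1/B_2$ explicitly via the Poisson-kernel identity~\eqref{derivativeB}.

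First I would set up the basic observations. Since $|B_j(\zeta)|=1$ for $\zeta\in\TT$ and $B_2$ has no zeros on $\TT$ (its zeros $w_k$ lie in $\DD$), the quotient $f$ is well-defined, real-analytic, and $\TT$-valued on $\TT$. The winding numbers satisfy
\[
\Delta_\TT \arg f \;=\; \Delta_\TT \arg B_1 \,-\, \Delta_\TT \arg B_2 \;=\; 2\pi(n-m),
\]
so the winding condition in Proposition~\ref{derivativeF} forces $n-m=1$ as a necessary condition for $f$ to be a sense-preserving homeomorphism. From here on I would assume this.

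Next I would compute the derivative of $\arg f(e^{i\theta})$ with respect to $\theta$. Using the fact (noted right after~\eqref{derivativeB}) that $\zeta B'(\zeta)/B(\zeta)$ is real-valued on $\TT$ and equals $d\arg B(\zeta)/d\arg \zeta$, applied to both $B_1$ and $B_2$, I get
\[
g(\zeta) \;=\; \zeta \frac{f'(\zeta)}{f(\zeta)} \;=\; \sum_{k=1}^n P_\DD(z_k,\zeta) \,-\, \sum_{k=1}^m P_\DD(w_k,\zeta).
\]
Thus the condition $g\ge 0$ on $\TT$ is literally~\eqref{Poissoncondition}, and $g>0$ on $\TT$ is its strict version. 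Plugging into Proposition~\ref{derivativeF}(b) gives the diffeomorphism statement immediately.

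For the homeomorphism statement via Proposition~\ref{derivativeF}(a), on top of $g\ge 0$ I still have to rule out $g$ vanishing identically on some nontrivial subarc. This is where I would use real-analyticity: each Poisson kernel $P_\DD(z_k,\cdot)$ extends to a real-analytic function on a neighborhood of $\TT$ in $\CC\setminus\{z_k\}$ (its only singularity sits inside $\DD$), so $g$ is real-analytic on $\TT$. Since
\[
\frac{1}{2\pi}\int_0^{2\pi} g(e^{i\theta})\,d\theta \;=\; n-m \;=\; 1 \;\ne\; 0,
\]
$g$ is not identically zero, hence (by real-analyticity) does not vanish on any nontrivial subarc. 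Combined with $g\ge 0$ and the winding count, Proposition~\ref{derivativeF}(a) concludes that $f$ is a circle homeomorphism. The only mildly delicate point in the whole argument is this last analyticity step; the computation itself is essentially a one-line application of~\eqref{derivativeB}.
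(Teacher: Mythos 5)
Your proposal is correct and follows exactly the paper's route: compute $\zeta f'(\zeta)/f(\zeta)$ via the identity~\eqref{derivativeB} and invoke Proposition~\ref{derivativeF}. The extra details you supply (the winding-number count forcing $n-m=1$, and real-analyticity of $g$ ruling out vanishing on a subarc) are precisely the points the paper leaves implicit, the latter being covered by the remark following Proposition~\ref{derivativeF}.
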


\begin{proof} Using~\eqref{derivativeB} we get
\[
\frac{\zeta f'(\zeta)}{f(\zeta)} = \frac{\zeta B_1'(\zeta)}{B_1(\zeta)}
- \frac{\zeta B_2'(\zeta)}{B_2(\zeta)} = \sum_{k=1}^n P_\DD(z_k, \zeta) - \sum_{k=1}^m P_\DD(w_k, \zeta)
\]
and then the conclusion follows from Proposition~\ref{derivativeF}.
\end{proof}

Lemma~\ref{Bquotient} raises the question of verifying the condition~\eqref{Poissoncondition}. This condition can be restated in two equivalent ways. First, it is equivalent to saying that the \textit{balayage} of the signed measure 
\[
\sum_{k=1}^n \delta_{z_k}  -\sum_{k=1}^m \delta_{w_k}
\]
onto the boundary of $\DD$ is a positive measure; see ~\cite{Totik}*{p. 6}. Second, it is equivalent to the inequality 
\begin{equation}\label{harmonicineq}
\sum_{k=1}^n h(z_k)  \ge  \sum_{k=1}^m h(w_k)     
\end{equation}
being true for every positive harmonic function $h$ in $\DD$; this follows by expressing $h$ as a Poisson integral. However, neither of these two interpretations is easier to verify in practice than the original condition~\eqref{Poissoncondition}. If the approach used for the special case ~\eqref{z^2/B} is applied to the analysis of ~\eqref{Poissoncondition}, it leads to the sufficient condition 
\begin{equation}\label{sufficient1}
\sum_{k=1}^n \frac{1-|z_k|}{1+|z_k|} \ge \sum_{k=1}^m \frac{1+|w_k|}{1-|w_k|}.
\end{equation}
However, ~\eqref{sufficient1} is unsatisfactory as it lacks the M\"obius invariance property that is inherent in condition~\eqref{Poissoncondition} (this invariance is particularly clear from the form ~\eqref{harmonicineq}). 

Let $d(z,w)$ denote the pseudo-hyperbolic distance between $z$ and $w$, namely
\[
d (z,q)=\left|\frac{z-w}{1-z\overline{w}}\right| \quad
(z, w \in\DD)
\]
By construction, $d$ is invariant under the M\"obius transformations of $\DD$. 
 
\begin{theorem} \label{pseudo}
If the points $z_0,\dots, z_{n}, w_1,\dots,w_n\in \mathbb D$ satisfy the condition
\begin{equation}\label{pseudo1}
d(z_k,w_k) \le 
\frac{(1-d(z_k, z_{0}))(1-d(w_k, z_0))} {4n},
\quad k=1,\dots, n,
\end{equation}
then the quotient 
\[
f(\zeta) = \prod_{k=0}^{n} \frac{\zeta-z_k}{1-\overline{z_k}\zeta} 
\left( \prod_{k=1}^{n} \frac{\zeta-w_k}{1-\overline{w_k}\zeta} \right)^{-1}
\]
is a circle homeomorphism. Furthermore, if~\eqref{pseudo1} is strict for some $k$, then $f$ is a diffeomorphism of $\TT$. 
\end{theorem}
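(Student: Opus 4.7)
The plan is to verify the Poisson-kernel inequality of Lemma~\ref{Bquotient}, which in this setting reads
\[
P_\DD(z_0,\zeta)+\sum_{k=1}^n P_\DD(z_k,\zeta)\ \ge\ \sum_{k=1}^n P_\DD(w_k,\zeta)\qquad(\zeta\in\TT),
\]
by bounding each difference $|P_\DD(z_k,\zeta)-P_\DD(w_k,\zeta)|$ by $1/n$, uniformly in $\zeta$. Since $n-m = (n+1) - n = 1$, the degree condition in Lemma~\ref{Bquotient} is already satisfied, so the Poisson inequality is the only thing to check.

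\emph{Step 1: M\"obius normalization.} Let $\phi$ be a M\"obius self-map of $\DD$ with $\phi(z_0)=0$. Pseudo-hyperbolic distances are $\phi$-invariant, so the hypothesis~\eqref{pseudo1} persists with the points $z_k,w_k$ replaced by $\phi(z_k),\phi(w_k)$, and now $d(z_k,z_0)=|\phi(z_k)|$. A direct computation using $|\phi(\zeta) - \phi(a)| = |\zeta-a|(1-|z_0|^2)/(|\zeta-z_0|\,|1-\overline{z_0}a|)$ shows that for $\zeta\in\TT$,
\[
P_\DD(a,\zeta)=|\phi'(\zeta)|\,P_\DD(\phi(a),\phi(\zeta)).
\]
Both sides of the desired Poisson inequality scale by the same factor $|\phi'(\zeta)|$, and $\phi(\zeta)$ ranges over $\TT$ as $\zeta$ does, so I may assume $z_0=0$. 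The hypothesis \eqref{pseudo1} then becomes $d(z_k,w_k)\le(1-|z_k|)(1-|w_k|)/(4n)$.

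\emph{Step 2: Pointwise comparison.} From $P_\DD(z,\zeta)=\re((\zeta+z)/(\zeta-z))$, a short computation gives the identity
\[
\frac{\zeta+z}{\zeta-z}-\frac{\zeta+w}{\zeta-w}=\frac{2\zeta(z-w)}{(\zeta-z)(\zeta-w)},
\]
and hence $|P_\DD(z,\zeta)-P_\DD(w,\zeta)|\le 2|z-w|/(|\zeta-z|\,|\zeta-w|)$. Combining this with $|z-w|=d(z,w)|1-\overline{z}w|\le 2\,d(z,w)$ and $|\zeta-u|\ge 1-|u|$ for $\zeta\in\TT$, $u\in\DD$, I obtain the uniform bound
\[
|P_\DD(z,\zeta)-P_\DD(w,\zeta)|\ \le\ \frac{4\,d(z,w)}{(1-|z|)(1-|w|)}.
\]

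\emph{Step 3: Conclusion.} Applied with $(z,w)=(z_k,w_k)$, the normalized hypothesis gives $|P_\DD(z_k,\zeta)-P_\DD(w_k,\zeta)|\le 1/n$ for every $\zeta\in\TT$. Summing over $k=1,\dots,n$ and using $P_\DD(0,\zeta)=1$,
\[
\sum_{k=1}^n\bigl(P_\DD(w_k,\zeta)-P_\DD(z_k,\zeta)\bigr)\ \le\ 1\ =\ P_\DD(0,\zeta),
\]
which is exactly \eqref{Poissoncondition}. Lemma~\ref{Bquotient} then identifies $f$ as a circle homeomorphism. If \eqref{pseudo1} is strict for some index $k_0$, then the Step~2 bound for $k_0$ is strict uniformly in $\zeta$ (the right-hand side of Step~2 does not depend on $\zeta$), so the summed inequality is strict for every $\zeta\in\TT$, and Lemma~\ref{Bquotient} upgrades the conclusion to a diffeomorphism.

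The only delicate point is the constant $4n$ in~\eqref{pseudo1}: one factor of $2$ comes from the bound $|1-\overline{z}w|\le 2$ and the other from the identity in Step~2. The remainder is routine once the M\"obius normalization has collapsed the $z_0$-dependence into the right-hand side.
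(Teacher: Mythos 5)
Your proof is correct and follows essentially the same route as the paper: normalize $z_0=0$ by a M\"obius change of variable, bound each $|P_\DD(z_k,\zeta)-P_\DD(w_k,\zeta)|$ by $1/n$ via $|z-w|\le 2d(z,w)$ and $|\zeta-u|\ge 1-|u|$, and sum against $P_\DD(0,\zeta)=1$; your Step~2 is exactly the paper's ``additive Harnack inequality'' (Lemma~\ref{additive}) specialized to Poisson kernels, which is how the paper proves that lemma anyway. Your direct argument for the strict case (the $k_0$-th bound is strict uniformly in $\zeta$) is a clean equivalent of the paper's contrapositive phrasing.
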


The proof requires the following ``additive Harnack inequality'' for positive harmonic functions in $\DD$. 

\begin{lemma}\label{additive}
For every positive harmonic function $h$ on $\DD$ and all $z, w\in \DD$ we have
\begin{equation}\label{additive1}
 |h(z)-h(w)| \le \frac{2 |z-w|}{(1-|z|)(1-|w|)} h(0).
\end{equation}
\end{lemma}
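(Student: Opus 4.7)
The plan is to bound the gradient of $h$ using the Poisson integral representation combined with the standard Harnack inequality, and then integrate along the segment joining $w$ to $z$.

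First, since $h$ is positive and harmonic on $\DD$, the Herglotz representation provides a positive Borel measure $\mu$ on $\TT$ with $h(z) = \frac{1}{2\pi}\int_\TT P_\DD(z,\zeta)\, d\mu(\zeta)$ and $h(0) = \mu(\TT)/(2\pi)$. As a function of $z$, the Poisson kernel $P_\DD(z,\zeta) = \re((\zeta+z)/(\zeta-z))$ is the real part of a holomorphic function whose derivative equals $2\zeta/(\zeta-z)^2$, so $|\nabla_z P_\DD(z,\zeta)| = 2/|\zeta-z|^2 = 2 P_\DD(z,\zeta)/(1-|z|^2)$. Differentiating under the integral then yields the pointwise gradient estimate $|\nabla h(z)| \le 2h(z)/(1-|z|^2)$.

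Second, I would combine this with the usual Harnack bound $h(z) \le \frac{1+|z|}{1-|z|}\,h(0)$ (which follows from $P_\DD(z,\zeta)\le (1+|z|)/(1-|z|)$ for $\zeta\in\TT$) to clear the $h(z)$ on the right-hand side, obtaining
\[
|\nabla h(z)| \le \frac{2\,h(0)}{(1-|z|)^2}.
\]

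Finally, I would parametrize the segment from $w$ to $z$ by $u_t = (1-t)w + tz$, $t\in[0,1]$, and write
\[
|h(z)-h(w)| \le |z-w|\int_0^1 |\nabla h(u_t)|\, dt \le 2\,h(0)\,|z-w|\int_0^1 \frac{dt}{(1-|u_t|)^2}.
\]
The triangle inequality $|u_t|\le (1-t)|w|+t|z|$ gives $1-|u_t| \ge (1-t)(1-|w|) + t(1-|z|)$, and the resulting one-variable integral evaluates via the substitution $s=(1-t)(1-|w|)+t(1-|z|)$ to $[(1-|z|)(1-|w|)]^{-1}$, which establishes \eqref{additive1}. The main obstacle is arranging the two ingredients so that $h(0)$ (rather than $h(z)$, $h(w)$, or $h(u_t)$) appears on the right; once $|\nabla h(z)| \le 2h(0)/(1-|z|)^2$ is in hand the remaining estimate is routine.
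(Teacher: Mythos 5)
Your proof is correct, and every step checks out: the identity $|\nabla_z P_\DD(z,\zeta)| = 2/|\zeta-z|^2$ follows from $P_\DD(z,\zeta)=\re\bigl((\zeta+z)/(\zeta-z)\bigr)$ and Cauchy--Riemann, the combination with Harnack gives $|\nabla h(z)|\le 2h(0)/(1-|z|)^2$, and the integral $\int_0^1\bigl((1-t)(1-|w|)+t(1-|z|)\bigr)^{-2}\,dt = \bigl((1-|z|)(1-|w|)\bigr)^{-1}$ is exact. The route is genuinely different from the paper's. The paper also reduces to a single Poisson kernel via the Herglotz representation, but then finishes in one line with the algebraic identity
\[
\frac{\zeta+z}{\zeta-z}-\frac{\zeta+w}{\zeta-w}=\frac{2\zeta(z-w)}{(\zeta-z)(\zeta-w)},
\]
bounding the denominator by $(1-|z|)(1-|w|)$ directly; no gradient estimate or path integration is needed. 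Your argument is longer but more modular: the intermediate bound $|\nabla h(z)|\le 2h(0)/(1-|z|)^2$ is a statement of independent interest, and the integrate-the-gradient scheme adapts to other domains and other kernels where no clean difference identity is available. The paper's computation, by contrast, is shorter and makes transparent exactly where equality can occur (collinear $\zeta, z, w$ with $w=0$), which the authors remark on immediately after the lemma. One small point of care in your version: differentiation under the integral sign should be justified by noting that $|\nabla_z P_\DD(z,\zeta)|$ is bounded uniformly in $\zeta\in\TT$ for $z$ in a compact subset of $\DD$, which is immediate from your formula.
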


\begin{proof} 
Since $h$ can be written as a Poisson integral
\[
h(z) = \int_\TT P_\DD(z, \zeta)\,d\mu(\zeta)
\]
for some measure $\mu$ on $\TT$~\cite{DurenHp}*{Theorem 1.1}, it suffices to prove~\eqref{additive1} for $h(z) = P(z, \zeta)$ with $\zeta\in\TT$. For such $h$ we have
\[
\begin{split}
    |h(z)-h(w)|
    & = \left|\re\left(\frac{\zeta + z}{\zeta - z} - \frac{\zeta + w}{\zeta - w} \right)\right| 
    \\ & \le \left|\frac{2(z-w)\zeta }{(\zeta - z)(\zeta-w)} \right|
      \le  \frac{2|z-w|}{(1-|z|)(1-|w|)}
\end{split}
\]
which proves~\eqref{additive1}, as $h(0)=1$.
\end{proof}

Equality is sometimes attained in~\eqref{additive1}, for example if $h(z) = P(z, 1)$, $w=0$, and $0<z<1$. However,~\eqref{additive1} is not sharp for general $z, w\in \DD$. It may be of interest to obtain a sharp form of the additive Harnack inequality. 

\begin{proof}[Proof of Theorem~\ref{pseudo}]  By Proposition~\ref{Bquotient}, we only need to estimate 
\begin{equation}\label{pseudo2}
\sum_{k=0}^{n} P_\DD(z_k, \zeta) - \sum_{k=1}^n P_\DD(w_k, \zeta)
\end{equation}
from below. By composing $f$ with a M\"obius transformation, we can achieve $z_{0}=0$ without affecting the homeomorphism property of $f$. Then~\eqref{pseudo1} takes the form
\begin{equation}\label{pseudo3}
d(z_k,w_k) \le \frac{(1-|z_k|)(1-|w_k|)}{4n},
\quad k=1,\dots, n.
\end{equation}
Using Lemma~\ref{additive} and noting that $|z_k-w_k|\le 2d(z_k, w_k)$, we arrive at 
\[
|P_\DD(z_k, \zeta) - P_\DD(w_k, \zeta)| 
\le \frac{4d(z_k, w_k)}{(1-|z_k|)(1-|w_k|)} \le \frac1n.
\]
Since $P_\DD(0, \zeta) = 1$ on $\TT$, we conclude that~\eqref{pseudo2} is bounded from below by $1 - n(1/n) = 0$.  Furthermore, if $0$ is attained, then equality must hold in~\eqref{pseudo3} and consequently in~\eqref{pseudo1} for all $k$.  
\end{proof}

We can finally show that the circle homeomorphisms of the two types identified by Proposition~\ref{boundedFourier}  indeed exist for all values of $n$. The following corollary of Theorem~\ref{pseudo} is obtained by specializing the theorem to the cases $w_k\equiv 0$ and $z_k\equiv 0$. 

\begin{corollary} Suppose $z_1,\dots, z_n\in \DD$. 
\begin{enumerate}[(a)]
    \item If for $k=1, \dots, n-1$ 
    \[|z_k| \le \frac{1-|z_n|}{4(n-1)}(1-d(z_k, z_n))\]
then the function
\[
f(\zeta) = \zeta^{1-n} \prod_{k=1}^n \frac{\zeta-z_k}{1-\overline{z_k}\zeta}
\]
is a circle homeomorphism. 
\item If $|z_k| \le 1/(4n+1)$ for $k=1,\dots, n$, then the function
\[
f(\zeta) = \zeta^{n+1} \prod_{k=1}^n \frac{1-\overline{z_k}\zeta}{\zeta-z_k}
\]
is a circle homeomorphism. 
\end{enumerate}
\end{corollary}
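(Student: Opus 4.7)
The plan is to deduce both parts of the corollary directly from Theorem~\ref{pseudo} by specializing half of the points to the origin, so that one of the two Blaschke products in the quotient collapses to a power of $\zeta$ and the pseudohyperbolic distances $d(\,\cdot\,,0)$ simplify to absolute values.

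For part (a) I would apply Theorem~\ref{pseudo} with its index (call it $m$, to avoid clashing with the corollary's $n$) taken to be $m=n-1$, identify its base point $z_0$ with the corollary's $z_n$, identify its remaining numerator points with the corollary's $z_1,\dots,z_{n-1}$, and set all of its $w_k$ equal to $0$. Then the denominator $\prod_{k=1}^{n-1}(\zeta-w_k)/(1-\overline{w_k}\zeta)$ collapses to $\zeta^{n-1}$, while the numerator regroups as $\prod_{k=1}^{n}(\zeta-z_k)/(1-\overline{z_k}\zeta)$, reproducing the $f$ in the statement. Using $d(0,z)=|z|$, the hypothesis~\eqref{pseudo1} of the theorem becomes exactly the assumed bound $|z_k|\le (1-|z_n|)(1-d(z_k,z_n))/(4(n-1))$ for $k=1,\dots,n-1$, and the conclusion follows.

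For part (b) I would reverse the roles of the two families: apply Theorem~\ref{pseudo} with all of its points $z_0,z_1,\dots,z_n$ taken to be $0$, and with its $w_1,\dots,w_n$ equal to the corollary's $z_1,\dots,z_n$. Now the numerator collapses to $\zeta^{n+1}$ and the denominator is precisely the Blaschke product in the statement, so the two formulas for $f$ agree. Hypothesis~\eqref{pseudo1} reduces to $|z_k|\le (1-|z_k|)/(4n)$, which rearranges to the stated bound $|z_k|\le 1/(4n+1)$.

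No genuine analytic obstacle arises beyond Theorem~\ref{pseudo}; the argument is essentially bookkeeping. The only subtle point to verify is that in each specialization the factors $1-d(z_k,z_0)$ and $1-d(w_k,z_0)$ in~\eqref{pseudo1} degenerate in the correct way (both simplifying in (b), only one simplifying in (a)), which is what produces the clean constants $4(n-1)$ and $4n+1$ in the two bounds.
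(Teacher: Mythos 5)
Your proposal is correct and is exactly the argument the paper intends: the text introducing the corollary states it is obtained by specializing Theorem~\ref{pseudo} to the cases $w_k\equiv 0$ and $z_k\equiv 0$, and your bookkeeping (relabeling $z_0$ as $z_n$ in part (a), using $d(0,z)=|z|$, and rearranging $|z_k|\le(1-|z_k|)/(4n)$ into $|z_k|\le 1/(4n+1)$ in part (b)) carries this out correctly.
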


\section{Approximation by rational circle homeomorphisms}\label{sec:approximate}

The goal of this section is to show that the circle homeomorphisms with a terminating Fourier series (i.e., those identified by the Proposition~\ref{boundedFourier}) are dense in the set of all circle homeomorphisms, with respect to the uniform norm. A similar approximation result for continuous functions was obtained by Helson and Sarason~\cite{HelsonSarason}, see also \cite{Blaschke}*{Theorem 4.3.1}. 

\begin{theorem}\label{HS}~\cite{HelsonSarason}*{page 9} Every continuous function $u\colon \TT\to\RR$ can be approximated uniformly by functions of the form $\arg (B(\zeta)/\zeta^n)$ where $B$ is a Blaschke product and $n$ is the degree of $B$.
\end{theorem}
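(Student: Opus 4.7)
The plan is to approximate $e^{iu}$ uniformly on $\TT$ by quotients of the form $B(\zeta)/\zeta^n$ with $n=\deg B$, and then lift the approximation to a continuous branch of the argument. Since $C^\infty(\TT,\RR)$ is dense in $C(\TT,\RR)$, I first reduce to the case $u\in C^\infty(\TT,\RR)$.

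Next I would approximate $e^{iu}$ in $C^1(\TT)$ by a trigonometric polynomial $T(\zeta)=\sum_{|k|\le N}\widehat{e^{iu}}(k)\zeta^k$; because $u$ is smooth, the Fourier truncation converges in every $C^r$ norm as $N\to\infty$. The polynomial $Q(\zeta):=\zeta^N T(\zeta)$ is entire, of degree at most $2N$, and its winding number on $\TT$ equals that of $\zeta^N e^{iu}$, namely $N$. By the argument principle, $Q$ has exactly $N$ zeros $\alpha_1,\dots,\alpha_N$ inside $\DD$ (counted with multiplicity), while its remaining zeros $\beta_j$ lie outside $\overline{\DD}$.

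I would then let $B(\zeta)=\sigma\prod_{k=1}^N(\zeta-\alpha_k)/(1-\overline{\alpha_k}\zeta)$ be the Blaschke product of degree $N$ with zeros $\alpha_1,\ldots,\alpha_N$ and with $\sigma\in\TT$ to be fixed below, and consider $R:=Q/B$. A direct cancellation shows $R$ is a polynomial whose zeros are the $\beta_j$ and the $1/\overline{\alpha_k}$, all lying outside $\overline{\DD}$; hence $R$ is zero-free on $\overline{\DD}$ and outer, with $|R|=|T|$ on $\TT$. The outer-function representation gives $R=\exp H$, where the boundary values satisfy $H|_\TT=\log|T|+i\,\widetilde{\log|T|}$ with $\widetilde{\cdot}$ the conjugate function. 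Choosing $\sigma$ so that $R(0)>0$ fixes this normalization (Jensen's formula guarantees consistency with $|\sigma|=1$). Since $T\to e^{iu}$ in $C^1$, the function $\log|T|$ tends to $0$ in every Hölder space $C^\alpha(\TT)$ with $\alpha\in(0,1)$; the conjugate function being bounded on $C^\alpha$ then yields $\widetilde{\log|T|}\to 0$ uniformly. Combined with $|T|\to 1$ uniformly, this gives $R|_\TT\to 1$ in $L^\infty$.

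Consequently $B/\zeta^N=T/R\to e^{iu}$ uniformly on $\TT$, and for a sufficiently close approximation one can select a branch of $\arg(B/\zeta^N)$ converging to $u$ uniformly. The delicate step is the passage from $\log|T|\to 0$ in $L^\infty$ to $\widetilde{\log|T|}\to 0$ in $L^\infty$: the conjugate function is not bounded on $L^\infty$, so mere uniform convergence of $\log|T|$ is insufficient. This is precisely what forces the preliminary reduction to smooth $u$, where Hölder control on $\log|T|$ becomes available.
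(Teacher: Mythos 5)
Your argument is correct, but note that the paper does not actually prove Theorem~\ref{HS}: the statement is quoted from Helson and Sarason, and the only related proof given in the paper is that of the stronger $C^1$ version, Theorem~\ref{HSC1}, which proceeds quite differently. Your route is multiplicative and function-theoretic: truncate the Fourier series of $e^{iu}$ to a trigonometric polynomial $T$, observe via the argument principle that $\zeta^N T(\zeta)$ has exactly $N$ zeros in $\DD$ (once $\|T-e^{iu}\|_\infty<1$, so that $T$ is zero-free on $\TT$ with winding number $0$), split those zeros off as a Blaschke product $B$ of degree $N$, and show the leftover outer polynomial $R$ tends to $1$ uniformly, whence $B/\zeta^N=T/R\to e^{iu}$. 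You correctly identify the one delicate point — conjugation is unbounded on $L^\infty$, so you need $\log|T|\to 0$ in a H\"older class, which your preliminary reduction to smooth $u$ and the $C^1$ convergence of the Fourier truncation provide via Privalov's theorem — and the degree bookkeeping $n=\deg B=N$ matches the statement. (The appeal to Jensen's formula is unnecessary: one simply rotates $\sigma$ so that $R(0)>0$.) The paper's method for Theorem~\ref{HSC1} is instead additive and works at the level of $u'$: it approximates $u'$ uniformly by an integer combination of Poisson kernels $\sum_k P_\DD(z_k,\cdot)-n$ (Lemma~\ref{integer Poisson}, Lemma~\ref{appro a single kernel}, Corollary~\ref{better Poisson}) and then integrates (Lemma~\ref{quotient approx}); the explicit placement of the points $z_k$ is what gives uniform control of the derivative of $\arg(B(\zeta)/\zeta^n)$, hence $C^1$ approximation, which is what Corollary~\ref{dense} requires. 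As written, your construction yields only $C^0$ control — exactly the content of Theorem~\ref{HS} — though it could plausibly be pushed to $C^1$ by running the same estimates one derivative higher.
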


Given a circle homeomorphism $f\colon \TT\to \TT$, we can take a continuous branch of $u(\zeta) = \arg(f(\zeta)/\zeta)$ on $\TT$ and 
apply Lemma~\ref{HS} to it. It follows that $f$ can be uniformly approximated by functions of the form $B(\zeta)/\zeta^{n-1}$ with $n=\deg B$. The same argument, applied to $v(\zeta) = \arg(\zeta/f(\zeta))$, yields an approximation to $f$ of the form $\zeta^{n+1}/B(\zeta)$.  However, this does not achieve our goal stated above, since it is not guaranteed that the approximating function is a circle homeomorphism. 

We need a stronger form of the Helson-Sarason theorem, with approximation in the $C^1$ norm instead of the uniform norm. A smooth function   $u\colon \TT\to \RR$ can be interpreted as a smooth $2\pi$-periodic function on $\RR$, and we use this interpretation to define its derivative $u'\colon \TT\to\RR$ and the $C^1$ norm $\|u\|_{C^1} = \sup_\TT |u| + \sup_\TT |u'|$. 

\begin{theorem}\label{HSC1} Every $C^1$-smooth function $u\colon \TT\to\RR$ can be approximated in the $C^1$ norm by functions of the form $\arg (B(\zeta)/\zeta^n)$ where $B$ is a Blaschke product and $n$ is the degree of $B$.
\end{theorem}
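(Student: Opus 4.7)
The plan is to reduce the $C^1$-approximation of $u$ to a uniform approximation of its derivative, and then to realize the latter by sums of Poisson kernels. Identity~\eqref{derivativeB} gives the $\theta$-derivative of $\arg(B(e^{i\theta})/e^{in\theta})$ as $\sum_{k=1}^n P_\DD(z_k, e^{i\theta}) - n$. Since both $u'$ and this sum have zero integral over $[0,2\pi]$, a uniform bound $\|u' - \sum_{k=1}^n P_\DD(z_k,\cdot) + n\|_\infty < \delta$ integrates, after fixing the unimodular constant $\sigma$ in $B$ so that $\arg(B(1)/1^n) = u(1)$, to $\|u - \arg(B/\zeta^n)\|_\infty \le 2\pi\delta$. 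Hence Theorem~\ref{HSC1} reduces to the claim: for every real $v \in C(\TT)$ with $\int_\TT v\,d\theta = 0$ and every $\delta > 0$, there exist $n \in \NN$ and $z_1,\ldots,z_n \in \DD$ such that
\[
\sup_{\zeta\in\TT}\Bigl|v(\zeta) + n - \sum_{k=1}^n P_\DD(z_k,\zeta)\Bigr| < \delta.
\]

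For the claim, mollification reduces matters to $v \in C^\infty(\TT)$. Set $w := v + n$, which is positive and smooth for large $n$ with $\int_\TT w\,d\theta = 2\pi n$. The strategy is to view $\sum_k P_\DD(z_k,\cdot)$ as a quadrature approximation of the Poisson extension $W$ of $w$. Partition $[0,2\pi)$ into sub-arcs $I_1,\ldots,I_n$ of equal $w$-mass $2\pi$ (possible by continuity and monotonicity of the cumulative distribution of $w\,d\theta$), let $\alpha_k \in I_k$ be the $w$-weighted barycenter, pick $r_n \in (0,1)$, and set $z_k := r_n e^{i\alpha_k}$. Then
\[
\sum_{k=1}^n P_\DD(z_k,\zeta) - w(\zeta) = \Bigl[\sum_{k=1}^n P_\DD(z_k,\zeta) - W(r_n\zeta)\Bigr] + \bigl[W(r_n\zeta) - w(\zeta)\bigr].
\]
The second bracket is the harmonic-extension error $V(r_n\zeta) - v(\zeta)$ for the extension $V$ of $v$, of order $O(1-r_n)$ for smooth $v$ and independent of $n$. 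The first bracket is a midpoint-type quadrature error for $W(r_n\zeta) = \frac{1}{2\pi}\int_0^{2\pi}P_\DD(r_n e^{i\alpha},\zeta)w(e^{i\alpha})\,d\alpha$; because $\alpha_k$ is a $w$-weighted barycenter, the first-order Taylor term vanishes on each $I_k$, and the remainder is bounded by $C\max_k|I_k|\cdot\int_0^{2\pi}|\partial_\alpha^2 P_\DD(r_n e^{i\alpha},\zeta)|\,d\alpha = O(n^{-1}(1-r_n)^{-2})$ using $|I_k| = O(1/n)$ and standard Poisson-kernel estimates.

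Choosing, for instance, $1-r_n = n^{-1/3}$ sends both error bounds to $0$ as $n\to\infty$, proving the claim. The main technical obstacle will be the Poisson-kernel quadrature estimate: since $\partial_\alpha^2 P_\DD(re^{i\alpha},\zeta)$ peaks like $(1-r)^{-3}$ near $e^{i\alpha}=\zeta$, the $L^1_\alpha$-bound $(1-r)^{-2}$ and the coordination between $1-r_n$ and the arc scale $|I_k|\sim 1/n$ must be handled with care. A fall-back, should the analytic estimates prove inconvenient, is to first replace $v$ by a trigonometric polynomial of degree $N$ and then carry out a moment-matching argument: the $m$-th Fourier coefficient of $\sum_k P_\DD(z_k,\cdot)$ equals $\sum_k r_k^{|m|}e^{-im\alpha_k}$, so realizing finitely many prescribed moments with points in $\DD$ (with abundant freedom as $n$ grows) would trade the quadrature difficulty for a moment-theoretic one.
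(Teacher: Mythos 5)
Your argument is correct, and while the first step coincides with the paper's (your reduction from $C^1$-approximation of $u$ to uniform approximation of the zero-mean function $u'$ by $\sum_{k}P_\DD(z_k,\cdot)-n$ is exactly Lemma~\ref{quotient approx} specialized to $w_k\equiv 0$), your treatment of the key approximation step is genuinely different. The paper splits it into two lemmas: Lemma~\ref{integer Poisson} writes the Poisson integral of $h=u'$ in terms of an antiderivative via integration by parts, discretizes by a Riemann sum, and converts the derivative terms $a_kQ_r(\phi_k-\theta)$ into differences $P_r(\phi_k-\theta+a_k)-P_r(\phi_k-\theta)$ by a first-order Taylor perturbation of equally spaced nodes, producing a difference of two Poisson sums; Lemma~\ref{appro a single kernel} then uses the exponentially convergent trapezoidal rule to replace each negative kernel $P_\DD(w_k,\cdot)$ by $n_k$ minus positive kernels, collapsing the negative part to the constant $n$. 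You instead produce $\sum_k P_\DD(z_k,\cdot)-n$ in one shot: equipartition $[0,2\pi)$ by the mass of $w=u'+n$, place nodes at $w$-barycenters on the circle $|z|=r_n$, and bound the midpoint-type quadrature error against the harmonic extension at radius $r_n$. The cost is the two-scale analysis you flag: one needs $1/n\ll 1-r_n\ll 1$, and strictly speaking the bound $\sum_k|I_k|\sup_{I_k}|\partial_\alpha^2P_{r_n}|\lesssim\|\partial_\alpha^2P_{r_n}\|_{L^1}=O((1-r_n)^{-2})$ carries an extra Riemann-sum correction $\max_k|I_k|\cdot\|\partial_\alpha^3P_{r_n}\|_{L^1}=O(n^{-1}(1-r_n)^{-3})$; with $1-r_n=n^{-1/3}$ both contributions to the total error are $O(n^{-1/3})$, so the bookkeeping does close. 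The paper's route avoids these kernel-derivative estimates by outsourcing the hard quantitative step to the trapezoidal rule of \cite{Trapezoid}, whereas yours is more self-contained, avoids the intermediate ``difference of two Blaschke products'' stage, and the barycentric node placement is a nice way to kill the first-order quadrature term. Your undeveloped fall-back (moment matching after truncating to a trigonometric polynomial) is not needed.
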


Before proving Theorem~\ref{HSC1}, let us observe that it implies that circle diffeomorphisms with a terminating Fourier series are uniformly dense in the set of circle homeomorphisms.

\begin{corollary}\label{dense} 
Every circle homeomorphism $f\colon \TT\to \TT$ is a uniform limit of circle diffeomorphisms of the form $\zeta\mapsto B(\zeta)/\zeta^{n-1}$ where $B$ is a Blaschke product of degree $n$. It is also a uniform limit of circle diffeomorphisms of the form $\zeta\mapsto \zeta^{n+1}/B(\zeta)$ where   $B$ is a Blaschke product of degree $n$.  
\end{corollary}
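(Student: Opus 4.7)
The plan is to reduce to Theorem~\ref{HSC1} after first smoothing $f$ into a $C^1$ circle diffeomorphism, and then transferring a $C^1$ approximation of the lift into a uniform approximation of the map by the desired Blaschke quotients. Write $f(e^{i\theta}) = e^{i\phi(\theta)}$ for a continuous, strictly increasing lift $\phi\colon\RR\to\RR$ with $\phi(\theta+2\pi)=\phi(\theta)+2\pi$, and set $u(\theta) := \phi(\theta)-\theta$. Because $\Delta_\TT\arg(f/\zeta)=0$, $u$ is continuous and $2\pi$-periodic. All approximation will be carried out on $u$, exploiting $|e^{ia}-e^{ib}|\le|a-b|$ to pass from uniform closeness of arguments to uniform closeness of maps.

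The first step is a smoothing of the lift by averaging. For small $\delta>0$ let $\phi_\delta(\theta) := \delta^{-1}\int_\theta^{\theta+\delta}\phi(s)\,ds$. This $\phi_\delta$ is $C^1$, still satisfies $\phi_\delta(\theta+2\pi)=\phi_\delta(\theta)+2\pi$, has derivative $\phi_\delta'(\theta)=\delta^{-1}(\phi(\theta+\delta)-\phi(\theta))$ which is continuous and strictly positive and hence bounded below by some $m_\delta>0$ on the compact torus, and converges uniformly to $\phi$ as $\delta\to 0$. Then $u_\delta(\theta):=\phi_\delta(\theta)-\theta$ is $C^1$, $2\pi$-periodic, with $u_\delta'+1\ge m_\delta>0$ and $u_\delta\to u$ uniformly.

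Now apply Theorem~\ref{HSC1} to $u_\delta$: for any $\eta>0$ there is a Blaschke product $B$ of degree $n$ such that $v(\theta) := \arg(B(e^{i\theta})/e^{in\theta})$ satisfies $\|v-u_\delta\|_{C^1}<\eta$. Choose $\eta<m_\delta$ and set $g(\zeta):=B(\zeta)/\zeta^{n-1}$. On $\TT$, $\arg g(e^{i\theta})=v(\theta)+\theta$, so $\frac{d}{d\theta}\arg g(e^{i\theta})=v'(\theta)+1\ge m_\delta-\eta>0$, while $\Delta_\TT\arg g = 2\pi n - 2\pi(n-1)=2\pi$. By Proposition~\ref{derivativeF}(b), $g$ is a circle diffeomorphism. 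The uniform estimate $|g(\zeta)-f(\zeta)|\le \|v-u_\delta\|_\infty+\|u_\delta-u\|_\infty$ can be made smaller than any prescribed tolerance by choosing $\delta$ small first and then $\eta$ small. The second claim is entirely analogous: apply Theorem~\ref{HSC1} to $-u_\delta$, producing $B$ of degree $n$ with $\tilde v(\theta):=\arg(B(e^{i\theta})/e^{in\theta})$ close to $-u_\delta$ in $C^1$, and set $g(\zeta):=\zeta^{n+1}/B(\zeta)$; then $\arg g(e^{i\theta})=\theta-\tilde v(\theta)$ has derivative $1-\tilde v'\approx 1+u_\delta'=\phi_\delta'\ge m_\delta$, and $\Delta_\TT\arg g=2\pi$, so Proposition~\ref{derivativeF}(b) again produces a circle diffeomorphism close to $f$.

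The main obstacle, and the whole reason for needing the $C^1$ strengthening Theorem~\ref{HSC1} rather than just Theorem~\ref{HS}, is the diffeomorphism clause. Uniform closeness of $v$ to $u_\delta$ alone does not secure positivity of $v'+1$, so from the Helson–Sarason theorem one cannot even conclude that the approximants $B/\zeta^{n-1}$ are homeomorphisms, whereas the $C^1$ version gives exactly the derivative control that Proposition~\ref{derivativeF}(b) consumes. The preliminary smoothing of $\phi$ is routine but essential, since $u$ itself need not be differentiable for a general circle homeomorphism.
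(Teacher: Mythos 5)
Your proposal is correct and follows essentially the same route as the paper: smooth the lift to obtain a $C^1$ circle diffeomorphism, apply Theorem~\ref{HSC1} to $\arg(g(\zeta)/\zeta)$ (respectively its negative), and use the $C^1$ control together with Proposition~\ref{derivativeF}(b) to conclude that the approximant $B(\zeta)/\zeta^{n-1}$ (respectively $\zeta^{n+1}/B(\zeta)$) is itself a diffeomorphism. The only cosmetic difference is the smoothing device (a moving average of the lift rather than convolution with a bump function), and your closing remark correctly identifies why the $C^1$ version of Helson--Sarason is indispensable here.
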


\begin{proof} It is straightforward to approximate $f$ by a diffeomorphisms $g\colon \TT\to \TT$ in the uniform norm. Indeed, $f$ lifts to a continuous strictly increasing function $F\colon \RR\to\RR$ such that $f(e^{i\theta}) = e^{iF(\theta)}$. It is easy to see that the convolution of $F$ with a smooth bump function has strictly positive derivative, and thus descends to a circle diffeomorphism.  

The function $u(\zeta) = \arg(g(\zeta)/\zeta)$ is $C^1$-smooth, with $u'>-1$ on $\TT$. Theorem~\ref{HSC1} provides $C^1$ approximation to $u$ of the form $v(\zeta) = \arg(B(\zeta)/\zeta^n)$. When $\|u-v\|_{C^1}$ is small enough, we have $v'>-1$ and therefore the map $\zeta \mapsto B(\zeta)/\zeta^{n-1}$ is a circle diffeomorphism. 

To prove the second statement, apply Theorem~\ref{HSC1} to the function $u(\zeta) = \arg(\zeta/g(\zeta))$ for which $u'<1$, and conclude as above. 
\end{proof}

Our first step toward the proof of Theorem~\ref{HSC1} is to approximate a continuous function with zero mean by a linear combination of Poisson kernels with integer coefficients. 

\begin{lemma} \label{integer Poisson}
Suppose $h\colon \TT\to\RR$ is continuous and $\int_\TT h = 0$. Then for any $\epsilon>0$ there exist  $n\in \mathbb N$ and $z_k, w_k\in \mathbb D$ ($k=1, \dots, n$) such that 
\begin{equation} \label{intPo1}
\left|h(\zeta) - \sum_{k=1}^n P_\DD (z_k, \zeta) + \sum_{k=1}^n  P_\DD (w_k, \zeta) \right| < \epsilon
\quad \text{for all } \zeta\in\TT.
\end{equation}
\end{lemma}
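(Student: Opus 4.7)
My plan is to reduce to the case where $h$ is smooth via Poisson regularization, then build the approximation as a quantile-based midpoint Riemann sum of Poisson kernels. For the reduction, I will replace $h$ with $u := P_{r_0} * h$ for $r_0 \in (0,1)$ sufficiently close to $1$ that $\|u-h\|_\infty < \epsilon/3$. Then $u$ is real-analytic on $\TT$ with $\int u / (2\pi) = \hat h(0) = 0$, so it suffices to approximate $u$ within $\epsilon/3$ by the required form, and the smoothness of $u$ will be crucial in the error analysis below.

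For the smooth $u$, I will pick a large integer $K > \|u\|_\infty$ (to be enlarged later) and form the positive smooth function $\tilde u := u + K$, which has mean $K$. Using the cumulative integral $G(\theta) := \int_0^\theta \tilde u(e^{i\phi})\, d\phi$, I will construct a quantile partition $0 = \theta_0 < \theta_1 < \cdots < \theta_K = 2\pi$ via $G(\theta_j) = 2\pi j$, so each arc $I_j = (\theta_{j-1}, \theta_j)$ carries equal $\tilde u$-mass $2\pi$. With $\xi_j$ the $\theta$-midpoint of $I_j$, I will set $z_j := r e^{i\xi_j}$ for a radius $r < 1$ to be chosen, and $w_j := 0$ for every $j = 1, \dots, K$. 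Since $P_\DD(0, \cdot) \equiv 1$, the $w$-sum contributes identically $K$, reducing the lemma to the uniform bound $\bigl\|\sum_{j=1}^K P_\DD(z_j, \cdot) - \tilde u\bigr\|_\infty < \epsilon/3$.

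This error will be split, via the \emph{smoothed target} $(P_r * \tilde u)(\zeta) = \int_0^{2\pi} P_\DD(re^{i\theta}, \zeta)\,\tilde u(\theta)\, d\theta/(2\pi)$, as
\[
\Bigl\|\textstyle\sum_j P_\DD(z_j, \cdot) - \tilde u\Bigr\|_\infty \le \Bigl\|\textstyle\sum_j P_\DD(z_j, \cdot) - P_r * \tilde u\Bigr\|_\infty + \|P_r * \tilde u - \tilde u\|_\infty.
\]
The second summand equals $\|P_r * u - u\|_\infty$ and tends to $0$ as $r \to 1^-$ by the uniform convergence of Poisson integrals of continuous functions. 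The first is the uniform error of a midpoint Riemann sum for the smooth integrand $\theta \mapsto P_\DD(re^{i\theta}, \zeta)\,\tilde u(\theta)$; Taylor expanding about each $\xi_j$ and using the midpoint cancellation $\int_{I_j}(\theta - \xi_j)\, d\theta = 0$, together with $|I_j| = O(1/K)$ and the boundedness of $\tilde u'$ and $\tilde u''$, I expect an error of order $O(1/K)$ with constant depending on $r$, uniformly in $\zeta$ by the rotational invariance $P_\DD(re^{i\theta}, \zeta) = P_\DD(re^{i(\theta - \arg\zeta)}, 1)$. Choosing $r$ close to $1$ first and then $K$ large relative to that $r$ will give the required approximation.

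The main obstacle is the interplay between $r$ and $K$: the regularization error shrinks as $r \to 1$, but the constant hidden in the $O(1/K)$ discretization estimate blows up with $r$ (since $\|\partial_\theta^k P_\DD(re^{i\cdot}, \zeta)\|_\infty$ grows like $(1-r)^{-k-1}$). This is handled by the order of quantifiers, fixing $r$ first. Equally important is the initial Poisson smoothing of $h$ to $u$: if $\tilde u$ were merely continuous, the midpoint cancellation in the Taylor expansion would fail and the discretization error would remain $O(1)$ instead of decaying in $K$.
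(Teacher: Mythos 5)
Your proposal is correct, but it takes a genuinely different route from the paper. The paper writes $h=g'$ for a periodic antiderivative $g$, integrates the Poisson integral by parts to express $H(re^{i\theta})$ through $\partial_\phi P_r$, discretizes on the \emph{uniform} grid $\phi_k=2\pi k/n$, and then emulates the derivative $a_kQ_r(\phi_k-\theta)$ by the finite difference $P_r(\phi_k-\theta+a_k)-P_r(\phi_k-\theta)$ with $a_k=-g(\phi_k)/n$; so both families of points sit on the circle of radius $r$, each $w_k=re^{i\phi_k}$ being slightly transported to $z_k=re^{i(\phi_k+a_k)}$. You instead add an integer constant $K$ to make $h$ positive, discretize the resulting positive measure $\tilde u\,d\theta/(2\pi)$ by an equal-mass (quantile) partition into $K$ unit point masses, and take all $w_k=0$. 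Your error analysis is sound: the first-order term in the Taylor expansion of $P_\DD(re^{i\theta},\zeta)$ about $\xi_j$ is controlled because $\int_{I_j}(\theta-\xi_j)\tilde u\,d\theta=O(\|u'\|_\infty|I_j|^3)$ (indeed one could kill it exactly by taking $\xi_j$ to be the $\tilde u$-weighted centroid of $I_j$, in which case even mere continuity of $h$ would suffice and the preliminary Poisson smoothing is a convenience rather than a necessity), the second-order term sums to $O(C_r/K)$, and the quantifier order (fix $r$, then take $K$ large) is handled correctly. The notable structural payoff of your construction is that it delivers the conclusion of Corollary~\ref{better Poisson} directly --- all the $w_k$ are already at the origin --- so the intermediate step of Lemma~\ref{appro a single kernel} (shifting the poles to $0$ via exponentially convergent trapezoidal sums) becomes unnecessary for the proof of Theorem~\ref{HSC1}. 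The paper's transport argument, in exchange, needs no positivity normalization or quantile machinery and keeps the perturbation entirely local.
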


\begin{proof} Let $H$ be the harmonic extension of $h$ to the unit disk. We fix $r<1$ such that 
\begin{equation}\label{intPo1.5}
|H(r\zeta) - h(\zeta)| < \epsilon/3 \quad \text{for all } \zeta\in\TT. 
\end{equation}
In what follows it will be convenient to consider $h$ as a $2\pi$-periodic function on $\RR$, writing $h(\theta)$ instead of $h(e^{i\theta})$. Similarly, we write $P_r(\theta)$ for $P_\DD(re^{i\theta}, 1)$. 

Since $h$ has zero mean, there exists a $C^1$-smooth $2\pi$-periodic function $g$ such that $g'=h$. The Poisson integral of $h$ can be written in terms of $g$ via the integration by parts:
\begin{equation}\label{intPo2}
\begin{split}
H(re^{i\theta}) & = \frac{1}{2\pi} \int_0^{2\pi} h(\phi) P_r(\phi-\theta)\, d\phi  
\\ & = -\frac{1}{2\pi} \int_0^{2\pi} g(\phi)\frac{\partial P_r(\phi-\theta)}{\partial \phi}\, d\phi
\end{split}
\end{equation}
Denote 
\[
Q_r(\theta)=\frac{\partial P_r(\theta)}{\partial \theta} 
\quad \text{and} \quad 
R_r(\theta)=\frac{\partial^2 P_r(\theta)}{\partial \theta^2}.
\] 
For sufficiently large $n$, the last integral in~\eqref{intPo2} is well approximated by an $n$-point Riemann sum, meaning that
\begin{equation}\label{intPo3}
\left|H(re^{i\theta}) + \frac{1}{2\pi n}\sum_{k=1}^n g(\phi_k)
Q_r(\phi_k-\theta) \right|<\frac{\epsilon}{3}
\end{equation}
where $ \phi_k = \frac{2\pi k}{n}$.

For $1\le k\le n$, let $a_k=-g(\phi_k)/n$. By Taylor's theorem,
\begin{equation}\label{intPo4}
P_r(\phi_k-\theta+a_k)-P_r(\phi_k-\theta) 
- a_kQ_r(\phi_k-\theta)
= \frac{R_r(\psi_k)}{2}a_k^2
\end{equation}
for some $\psi_k$. Both $g$ and $R_r$ are bounded on $\RR$, being continuous and periodic. Therefore,~\eqref{intPo4} implies 
\begin{equation}\label{intPo5}
\left|  P_r(\phi_k-\theta+a_k)-P_r(\phi_k-\theta)-a_k Q_r(\phi_k-\theta) \right|  \le  \frac{C}{n^2}
\end{equation}
with a constant $C$ independent of $n$. Summing over $k$ and letting $n$ be sufficiently large, we conclude that 
\begin{equation}\label{intPo6}
\left|\sum_{k=1}^n \{ P_r(\phi_k-\theta+a_k)-P_r(\phi_k-\theta)-a_k Q_r(\phi_k-\theta)\} \right| < \frac{\epsilon}{3}
\end{equation}
Finally, combine~\eqref{intPo1.5}, ~\eqref{intPo3} and~\eqref{intPo6} to obtain  
\[
\left| h(\theta)-\sum_{k=1}^n \{P_r(\phi_k-\theta+a_k) - P_r(\phi_k-\theta)\} \right| <\epsilon
\]
from where the lemma follows by letting $z_k=re^{i(\phi_k+a_k)}$ and $w_k=re^{i\phi_k}$. 
\end{proof}

The approximation provided by Lemma~\ref{integer Poisson} yields approximation in the $C^1$ norm by a quotient of Blaschke products, as shown below. 

\begin{lemma}\label{quotient approx} Suppose $u\colon \TT\to \RR$ is a $C^1$ smooth function, and the numbers $z_1, \dots, z_n$, $w_1, \dots, w_n  \in \DD$ are such that
\begin{equation} \label{quo1}
\left|u'(\zeta) - 
\sum_{k=1}^n P_\DD (z_k, \zeta)
+ \sum_{k=1}^n P_\DD (w_k, \zeta)
\right| < \epsilon
\end{equation}
for all $\zeta\in\TT$. Then there exists $\sigma\in\TT$ such that the rational function 
\[
B(\zeta) = \sigma \prod_{k=1}^n \frac{\zeta - z_k}{1-\overline{z_k}\zeta}  \frac{1-\overline{w_k}\zeta}{\zeta-w_k}
\]
satisfies $\|u  - \arg B\|_{C^1} < (\pi+1) \epsilon$ for some continuous branch of $\arg B$.
\end{lemma}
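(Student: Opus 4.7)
The plan is to interpret the sum $\sum_k P_\DD(z_k,\cdot) - \sum_k P_\DD(w_k,\cdot)$ as the derivative of $\arg B$ on $\TT$ via identity~\eqref{derivativeB}, after which the hypothesis~\eqref{quo1} becomes an $L^\infty$ bound on $(u-\arg B)'$. Normalizing $u-\arg B$ to have mean zero by choosing $\sigma$ then converts this derivative bound into a sup bound via a Poincar\'e-type estimate on $\TT$.

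First I would verify that $\arg B$ admits a continuous single-valued branch on $\TT$. Each factor $(\zeta-z_k)/(1-\overline{z_k}\zeta)$ is a Blaschke factor, so it maps $\TT$ to $\TT$ with winding number $1$, while its reciprocal has winding $-1$. Since $B$ contains $n$ factors of each type, $B\colon \TT\to\TT$ has winding number $0$, so a continuous branch of $\arg B$ exists on $\TT$, and any two such branches differ by a real constant. Moreover, replacing $\sigma$ by $\sigma e^{ic}$ shifts any chosen branch of $\arg B$ by $c$, which gives me complete freedom to adjust the additive normalization.

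Next I would compute the derivative. Writing $B = \sigma B_1/B_2$ where $B_1$ and $B_2$ are Blaschke products with zeros $z_k$ and $w_k$ respectively, identity~\eqref{derivativeB} gives, for $\zeta\in\TT$,
\[
\zeta \frac{B'(\zeta)}{B(\zeta)} = \sum_{k=1}^n P_\DD(z_k,\zeta) - \sum_{k=1}^n P_\DD(w_k,\zeta),
\]
which (as noted after~\eqref{derivativeB}) is precisely $\frac{d}{d\theta}\arg B(e^{i\theta})$ at $\zeta=e^{i\theta}$. So~\eqref{quo1} is exactly the statement that $\|u' - (\arg B)'\|_{L^\infty(\TT)} < \epsilon$, giving immediately the derivative half of the $C^1$ norm.

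For the sup bound, let $\phi(\theta) = u(e^{i\theta}) - \arg B(e^{i\theta})$, which is a $C^1$, $2\pi$-periodic real function with $\|\phi'\|_\infty < \epsilon$. Choose $\sigma$ so that $\int_0^{2\pi}\phi\,d\theta = 0$; by continuity there is $\theta_0$ with $\phi(\theta_0)=0$. For any other $\theta$, connecting $\theta_0$ to $\theta$ along the shorter of the two arcs of $\TT$ (length at most $\pi$) and using the fundamental theorem of calculus yields $|\phi(\theta)| \le \pi\|\phi'\|_\infty < \pi\epsilon$. Combining with the derivative bound gives $\|u-\arg B\|_{C^1}<(\pi+1)\epsilon$, as required. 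The only mild subtlety is the verification that the winding count is zero so that branches and means make sense; everything else is a two-line application of the fundamental theorem of calculus.
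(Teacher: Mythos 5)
Your proof is correct and follows essentially the same route as the paper: identify $(\arg B)'$ with the difference of Poisson kernel sums via \eqref{derivativeB}, then convert the derivative bound into a sup bound of $\pi\epsilon$ by fixing the additive constant through the choice of $\sigma$. The only cosmetic difference is your normalization (mean zero plus a vanishing point and the shorter arc) versus the paper's simpler choice $\arg B(1)=u(1)$ followed by the mean value theorem on $[-\pi,\pi]$; both give the same constant.
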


\begin{proof} Recall from~\eqref{derivativeB} that the derivative of $\arg B(e^{i\theta})$ with respect to $\theta$ is equal to 
\[
\sum_{k=1}^n P_\DD (z_k, e^{i\theta})
- \sum_{k=1}^n P_\DD (w_k, e^{i\theta})
\]
By~\eqref{quo1}, the derivative of the difference $u(e^{i\theta}) - \arg B(e^{i\theta})$ is less than $\epsilon$ in absolute value. By choosing $\sigma$ so that $\arg B(1) = u(1)$ and using the mean value theorem, we conclude that $|u(e^{i\theta}) - \arg B(e^{i\theta})| < \pi \epsilon$ for all $\theta \in [-\pi, \pi]$. The estimate for the $C^1$ norm of $u-\arg B$ follows.
\end{proof}

The combination of Lemmas~\ref{integer Poisson} and \ref{quotient approx} yields an approximation result similar to Theorem~\ref{HSC1} but with the quotient of two Blaschke products instead of a Blaschke product divided by a monomial. The following result will allow us to shift the poles of the quotient to $0$. 

\begin{lemma} \label{appro a single kernel}
For every $z_0\in \DD$ and every $\epsilon>0$ there exist $n\in \mathbb N$ and $z_1, \dots, z_{n-1}\in \mathbb D$ such that  
\begin{equation}\label{single1}
    \left| \sum_{k=0}^{n-1} P(z_k,\zeta) - n \right| < \epsilon 
\quad \text{for all }\zeta\in \TT. 
\end{equation}
Specifically, we can take $z_k = z_0 \exp(2\pi i k/n)$. 
\end{lemma}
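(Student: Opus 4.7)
The plan is to verify that the explicit construction $z_k = z_0 e^{2\pi i k/n}$ works by expanding the Poisson kernel as a Fourier series and using orthogonality of characters of $\ZZ/n\ZZ$ to kill every mode but the zeroth, leaving a small tail. First, I would record the standard Fourier expansion
\[
P_\DD(z, \zeta) = 1 + 2\re \sum_{j=1}^\infty z^j \bar\zeta^j
\qquad (z\in\DD,\ \zeta\in\TT),
\]
which follows from $P_\DD(z,\zeta)=1+\re\bigl(2(z/\zeta)/(1-z/\zeta)\bigr)$ together with the geometric series, and converges absolutely since $|z/\zeta|=|z|<1$.

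Next, I would substitute $z_k = z_0 e^{2\pi i k/n}$ into this expansion and interchange sums. The $j$th frequency picks up the factor $\sum_{k=0}^{n-1} e^{2\pi ijk/n}$, which equals $n$ when $n\mid j$ and vanishes otherwise. Only the multiples $j=mn$ survive, and one obtains
\[
\sum_{k=0}^{n-1} P_\DD(z_k, \zeta)
= n + 2n\re \sum_{m=1}^\infty z_0^{mn}\bar\zeta^{mn}.
\]
The constant term $n$ from $m=0$ is exactly the target value; everything else is error.

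Finally, bounding the tail trivially by its modulus gives
\[
\left|\sum_{k=0}^{n-1} P_\DD(z_k, \zeta) - n\right|
\le 2n \sum_{m=1}^\infty |z_0|^{mn}
= \frac{2n|z_0|^n}{1-|z_0|^n},
\]
uniformly in $\zeta\in\TT$. Since $|z_0|<1$, the right-hand side tends to $0$ as $n\to\infty$, so choosing $n$ large enough produces the inequality \eqref{single1}. I do not anticipate a real obstacle here: once the Fourier expansion of $P_\DD$ is in place, the annihilation of the nonzero frequencies by the $n$th roots of unity is automatic, and the only quantitative input is the geometric decay of $|z_0|^n$.
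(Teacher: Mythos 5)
Your proof is correct. The paper takes a different route: it applies the exponentially convergent trapezoidal rule for holomorphic functions (Theorem 2.1 of Trefethen--Weideman) to $\psi(z)=(\zeta+z)/(\zeta-z)$ on the circle $|z|=r=|z_0|$, invokes the mean value property to identify the limit as $n\cdot\psi(0)=n$, and takes real parts. You instead expand $P_\DD(z,\zeta)=1+2\re\sum_{j\ge 1}z^j\bar\zeta^j$ and let the orthogonality of the $n$th roots of unity annihilate every frequency not divisible by $n$, leaving the explicit error $2n|z_0|^n/(1-|z_0|^n)$. These are really two faces of the same phenomenon --- the exponential accuracy of the trapezoid rule for holomorphic integrands is itself proved by exactly this aliasing-of-Fourier-modes argument --- but your version is self-contained, avoids the external citation and the auxiliary radius $R$ with $r<R<1$, and yields a cleaner quantitative bound ($2n|z_0|^n/(1-|z_0|^n)$ versus the paper's $2\pi Mn/((R/r)^n-1)$ with $M=(1+R)/(1-R)$). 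The interchange of the sums over $j$ and $k$ is justified by the absolute convergence you note, so there is no gap.
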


\begin{proof} Fix $\zeta\in \TT$. Let $r=|z_0|$ and choose $R$ such that $r<R<1$. The rational function $\psi (z) = (\zeta+z)/(\zeta-z)$ is holomorphic and bounded by $M=(1+R)/(1-R)$ on the disk $|z|\le R$. Theorem~2.1~\cite{Trapezoid} asserts that uniform Riemann sums converge exponentially fast to the integral of $\psi$ over $|z|=r$, namely
\begin{equation}\label{single2}
\left| \frac{2\pi}{n}\sum_{k=0}^{n-1} \psi(z_k) - \int_0^{2\pi} \psi(re^{i\theta})\, d\theta \right| \le \frac{2\pi M}{(R/r)^n-1}
\end{equation}
where $z_k = z_0 \exp(2\pi i k/n)$. By the mean value property $\int_0^{2\pi} \psi(re^{i\theta})\, d\theta =2\pi \psi(0) = 2\pi $. Multiplying~\eqref{single2} by $n/(2\pi)$  we obtain 
\begin{equation}\label{single3}
\left|\sum_{k=0}^{n-1} \psi(z_k) - n \right| \le \frac{2\pi M n}{(R/r)^n-1}
\end{equation}
The right hand side of~\eqref{single3} tends to $0$ as $n\to \infty$. This and the identity $\re \psi(z_k) = P_\DD(z_k, \zeta)$ yield~\eqref{single1}.  
\end{proof}

The following is a corollary of Lemmas~\ref{integer Poisson} and
~\ref{appro a single kernel}.

\begin{corollary}\label{better Poisson}
Suppose $h\colon \TT\to\RR$ is continuous and $\int_\TT h = 0$. Then for any $\epsilon>0$ there exist  $n\in \mathbb N$ and $z_1,\dots, z_n\in \mathbb D$ such that 
\begin{equation}\label{better0}
\left|h(\zeta) - \sum_{k=1}^n P_\DD (z_k, \zeta) + n \right| < \epsilon
\quad \text{for all } \zeta\in\TT.
\end{equation}
\end{corollary}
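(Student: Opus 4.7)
The plan is to deduce Corollary~\ref{better Poisson} by combining the two preceding lemmas: Lemma~\ref{integer Poisson} already gives an approximation with integer coefficients, but of the form $\sum P_\DD(z_k) - \sum P_\DD(w_k)$, whereas we want a constant $-n$ in place of the negative sum of Poisson kernels. Lemma~\ref{appro a single kernel} provides exactly the mechanism to convert a single Poisson kernel $P_\DD(w_k,\zeta)$ into (an integer) minus (a sum of Poisson kernels), up to arbitrarily small error.

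First, I would apply Lemma~\ref{integer Poisson} to $h$ with tolerance $\epsilon/2$, obtaining an integer $n_1$ and points $z_1,\dots,z_{n_1},w_1,\dots,w_{n_1}\in \DD$ with
\[
\left|h(\zeta) - \sum_{k=1}^{n_1} P_\DD(z_k,\zeta) + \sum_{k=1}^{n_1} P_\DD(w_k,\zeta)\right| < \epsilon/2
\quad(\zeta\in\TT).
\]
Next, for each $k=1,\dots,n_1$, I would apply Lemma~\ref{appro a single kernel} with $z_0=w_k$ and tolerance $\epsilon/(2n_1)$, producing an integer $m_k\ge 1$ and points $w_{k,1},\dots,w_{k,m_k-1}\in\DD$ (setting $w_{k,0}=w_k$) such that
\[
\left|\sum_{j=0}^{m_k-1} P_\DD(w_{k,j},\zeta) - m_k\right| < \frac{\epsilon}{2n_1}
\quad (\zeta\in\TT).
\]
Equivalently, $P_\DD(w_k,\zeta) = m_k - \sum_{j=1}^{m_k-1} P_\DD(w_{k,j},\zeta)$ up to an error of $\epsilon/(2n_1)$. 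Summing over $k$ and using the triangle inequality, the total error contribution from this second step is at most $\epsilon/2$.

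Substituting these approximations into the inequality from the first step yields
\[
\left|h(\zeta) - \sum_{k=1}^{n_1} P_\DD(z_k,\zeta) - \sum_{k=1}^{n_1}\sum_{j=1}^{m_k-1} P_\DD(w_{k,j},\zeta) + \sum_{k=1}^{n_1} m_k\right| < \epsilon.
\]
The key bookkeeping observation is that the total number of Poisson kernels appearing on the left is $n_1 + \sum_{k=1}^{n_1}(m_k-1) = \sum_{k=1}^{n_1} m_k$, which exactly matches the additive constant. Relabeling this common value as $n$ and the corresponding points collectively as $z_1,\dots,z_n$ delivers~\eqref{better0}.

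The only subtlety is the bookkeeping just described: one must ensure that the count of Poisson kernels retained on the left equals the integer constant accumulated on the right. This works out automatically because Lemma~\ref{integer Poisson} introduces $z_k$ and $w_k$ in equal numbers, and Lemma~\ref{appro a single kernel} converts one Poisson kernel plus $m_k-1$ new ones into the integer $m_k$ — so removing the original $n_1$ kernels $P_\DD(w_k)$ and adding back $\sum_k(m_k-1)$ new ones produces a deficit that matches $\sum_k m_k - n_1 \cdot 0 = \sum_k m_k$ minus the $n_1$ kept $z_k$'s, balancing the equation.
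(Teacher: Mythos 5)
Your proof is correct and follows exactly the same route as the paper: apply Lemma~\ref{integer Poisson} with tolerance $\epsilon/2$, then use Lemma~\ref{appro a single kernel} to replace each $P_\DD(w_k,\zeta)$ by $m_k$ minus a sum of $m_k-1$ Poisson kernels with per-term error $\epsilon/(2n_1)$. Your explicit check that the total count of retained kernels, $n_1+\sum_k(m_k-1)=\sum_k m_k$, equals the accumulated constant is the right bookkeeping (the paper leaves this implicit), though your final sentence restating it is garbled and could be cut.
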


\begin{proof} Lemma~\ref{integer Poisson} yields an approximation of the form
\begin{equation}\label{better1}
\max_{\zeta\in\TT}\left|h(\zeta) - \sum_{k=1}^n P_\DD (z_k, \zeta) + \sum_{k=1}^n  P_\DD (w_k, \zeta) \right| < \frac{\epsilon}{2}.
\end{equation}
Then we use Lemma~\ref{appro a single kernel} to replace each term $P_\DD (w_k, \zeta)$ in ~\eqref{better1} by a sum of the form $n_k - \sum_{j=1}^{n_k-1} P(w_{kj}, \zeta)$ such that 
\[
\max_{\zeta \in \TT} \left|P_\DD (w_k, \zeta) - \left\{n_k - \sum_{j=1}^{n_k-1} P(w_{kj}, \zeta) \right\}  \right| < \frac{\epsilon}{2n}
\]
thus arriving at~\eqref{better0} with some larger value of $n$. 
\end{proof}

\begin{proof}[Proof of Theorem~\ref{HSC1}] First apply Corollary~\ref{better Poisson} to the function $h=u'$. Then use Lemma~\ref{quotient approx} with $w_1=\cdots=w_n=0$.
\end{proof}

\section{Vanishing Fourier coefficients}\label{sec:vanishing}

In the investigation of the Fourier coefficients of circle homeomorphisms a special role is played by the first coefficient $\hat f(1)$. Indeed, the identity map $f(\zeta)=\zeta$ has all coefficients other than $\hat f(1)$ equal to zero. In contrast, Hall~\cite{Hall} proved that $\hat f(1)$ never vanishes for circle homeomorphisms. This result cannot be strengthened to a lower bound for $|\hat f(1)|$, as is shown by the M\"obius transformation $f(\zeta) = (\zeta+a)/(1+\bar a\zeta)$ which has $\hat f(0)=a$ and consequently $|\hat f(1)|^2 \le 1-|a|^2$ by Parseval's theorem. However, having large $|\hat f(0)|$ is the only obstruction here: 
Hall~\cite{Hall}*{Theorem 2} gave a positive lower bound for $|\hat f(0)|+|\hat f(1)|$ among all circle homeomorphisms, which 
Weitsman~\cite{Weitsman} sharpened to $|\hat f(0)|+|\hat f(1)| > 2/\pi$, using~\cite{Hall85}.

The most notable estimate for the Fourier coefficients of circle homeomorphisms is $|\hat f(-1)|^2 + |\hat f(1)|^2 \ge 27/(4\pi^2)$, which is a sharp bound obtained by Hall~\cite{Hall} after 30 years of gradual improvements, starting with the paper~\cite{Heinz52} by Heinz. This line of investigation, motivated by curvature estimates for minimal surfaces, remains unfinished: see ~\cite{Duren}*{\S10.3} and ~\cite{Hall98}.   

There are also nonvanishing results for more general circle embeddings.  When $f(\TT)$ is convex, 
the Rad\'o-Kneser-Choquet theorem~\cite{Duren}*{p. 29} states that the harmonic extension of $f$ is a diffeomorphism, and therefore $\hat f(1)\ne 0$; more precisely, $|\hat f(1)| > |\hat f(-1)|$. When $f(\TT)$ is star-shaped about $0$, 
Hall~\cite{Hall}*{Theorem 2} proved that $|\hat f(0)|+ |\hat f(1)| > 0 $. The following proposition shows that the term $|\hat f(0)|$ is necessary here.

\begin{proposition} \label{star}
 There exists an embedding $f\colon \TT\to \CC$ such that $f(\TT)$ is star-shaped about $0$ and $\hat f(1)=0$.
\end{proposition}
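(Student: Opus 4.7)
I will construct $f$ in polar form as $f(e^{i\theta})=r(\theta)e^{i\phi(\theta)}$, where $r\colon[0,2\pi]\to(0,\infty)$ is continuous and $\phi\colon[0,2\pi]\to\RR$ is continuous and strictly increasing with $\phi(0)=0$ and $\phi(2\pi)=2\pi$. Any such $f$ is automatically a sense-preserving embedding of $\TT$ whose image is star-shaped about $0$, because every ray from the origin meets the image in exactly one point. The condition $\hat f(1)=0$ becomes
\[
\int_0^{2\pi} r(\theta)H(\theta)\,d\theta=0,\qquad H(\theta):=e^{i(\phi(\theta)-\theta)}.
\]

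The key idea is to arrange $H(\theta)\approx e^{-i\theta}$ on most of $\TT$, so that $\hat H(-1)$ is close to $1$ while $\hat H(n)$ is close to $0$ for every $n\ne -1$. Accordingly, I fix small $\delta,\epsilon>0$ and let $\phi$ be the continuous piecewise linear function rising from $0$ to $2\pi-\epsilon$ on $[0,\delta]$ and from $2\pi-\epsilon$ to $2\pi$ on $[\delta,2\pi]$. Both slopes are positive so $\phi$ is strictly increasing, and on the second piece $\phi(\theta)-\theta=2\pi-\theta+O(\epsilon)$. A direct evaluation of the two resulting Fourier integrals yields $\hat H(-1)=1+O(\delta+\epsilon)$ together with $\hat H(n)=O(\delta+\epsilon)$ for every fixed $n\ne-1$.

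With $\phi$ in hand I set $r(\theta)=1+\alpha\cos\theta+\beta\sin\theta$ for real parameters $\alpha,\beta$ to be determined. Parseval's identity rewrites the integral condition as
\[
\hat H(0)+\tfrac{\alpha-i\beta}{2}\hat H(-1)+\tfrac{\alpha+i\beta}{2}\hat H(1)=0,
\]
a $2\times 2$ real linear system in $(\alpha,\beta)$ whose coefficient determinant equals $(|\hat H(1)|^2-|\hat H(-1)|^2)/4$. By the estimates above this determinant tends to $-1/4$ as $\delta,\epsilon\to 0$, so the system has a unique solution with $|\alpha|+|\beta|=O(|\hat H(0)|)=O(\delta+\epsilon)$. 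For $\delta,\epsilon$ small enough this keeps $r>0$, producing the desired $f$. The main (though entirely routine) obstacle is the Fourier-coefficient estimate for $H$, which reduces to evaluating explicit integrals of exponentials over the two linear pieces of $\phi$.
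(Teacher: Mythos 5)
Your construction is correct, and it is genuinely different from the paper's. The paper produces a completely explicit example: a piecewise-linear map onto a non-convex quadrilateral with vertices $1,\,x+iy,\,-1,\,x-iy$, symmetric about the real axis so that $\hat f(1)$ is real and the condition $\hat f(1)=0$ reduces to one linear equation in $(x,y)$, solved by $(8,1/\sqrt3)$. You instead work with a radial-graph parametrization $f(e^{i\theta})=r(\theta)e^{i\phi(\theta)}$ (which indeed makes the star-shape and embedding properties automatic once $r>0$ and $\phi$ is an increasing bijection onto $[0,2\pi]$), concentrate the angular sweep of $\phi$ into a short interval so that $H(\theta)=e^{i(\phi(\theta)-\theta)}$ is close to $e^{-i\theta}$, and then solve for a small trigonometric perturbation of $r\equiv 1$ by a perturbative linear-algebra argument; the determinant computation $(|\hat H(1)|^2-|\hat H(-1)|^2)/4\to-1/4$ is right, and the resulting $|\alpha|+|\beta|=O(\delta+\epsilon)$ keeps $r>0$. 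What each approach buys: the paper's example is concrete and checkable by hand, and its reflection symmetry forces all $\hat f(n)$ to be real; yours is softer but shows something the paper's does not, namely that the image curve can be taken Hausdorff-close to the unit circle (the obstruction to $\hat f(1)\ne 0$ lives in the parametrization, not in the shape of the image, beyond the necessary failure of convexity coming from the Rad\'o--Kneser--Choquet theorem). The only part you leave as "routine" --- the estimates $\hat H(-1)=1+O(\delta+\epsilon)$ and $\hat H(n)=O(\delta+\epsilon)$ for $n=0,1$ --- does check out: the short piece contributes $O(\delta)$ since $|H|=1$, and on the long piece $H(\theta)=e^{-i\theta}+O(\epsilon)$, so the integrals reduce to $\frac{1}{2\pi}\int_\delta^{2\pi}e^{-i(n+1)\theta}\,d\theta$ up to these errors.
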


\begin{proof} We look for $f$ such that  $\overline{f(\zeta)}=f(\bar{\zeta})$ for all $\zeta\in\TT$, which ensures that $f(\TT)$ is symmetric about the real axis, and that $\hat f$ is real-valued. Let us write $f(e^{i\theta}) = F(\theta)$ for $\theta\in [0,\pi]$, then 
\[ \hat f(n) = \frac{1}{\pi}\re \int_0^\pi F(\theta)e^{-in\theta} d\theta. \]
Our $F$ will be piecewise linear, which justifies integration by parts:
\[
\pi \hat f(1) =\re  i F(\theta)e^{-i\theta}\bigg|_{0}^{\pi} 
- \re \int_0^\pi i F'(\theta) e^{-i\theta} d\theta = \im \int_0^\pi F'(\theta) e^{-i\theta} d\theta
\]
where the boundary term has zero contribution because $F(0)$ and $F(\pi)$ are real. 

We choose $F$ piecewise linear with $F(0)=1$, $F(\pi)=-1$, and $F(2\pi /3) = x+iy$ where $x, y>0$ are to be chosen later. Thus, 
\[
F'(\theta) = \begin{cases} \frac{3}{2\pi} (x-1+iy),\quad & 0<\theta<2\pi/3; \\ 
\frac{3}{\pi}(-1-x-iy),\quad & 2\pi/3 < \theta < \pi 
\end{cases}
\]
which yields
\[
\begin{split}
\im \int_0^\pi F'(\theta) e^{-i\theta} d\theta & = - \frac{3(x-1)}{2\pi}  \int_0^{2\pi/3} \sin \theta\,d\theta 
+ \frac{3y}{2\pi}  \int_0^{2\pi/3} \cos \theta\,d\theta \\ 
&+ \frac{3(x+1)}{\pi} \int_{2\pi/3}^\pi  \sin \theta\,d\theta
-  \frac{3y}{\pi} \int_{2\pi/3}^\pi  \cos \theta\,d\theta \\ 
& = - \frac{9(x-1)}{4\pi}   + \frac{3\sqrt{3} y}{4\pi} + \frac{3(x+1)}{2\pi} +  \frac{3\sqrt{3}y}{2\pi} \\
& = \frac{3}{4\pi} \left(-x+3\sqrt{3}y +5\right) 
\end{split}
\]
For example, we achieve $\hat f(1)=0$ with the choice $(x, y)=(8, 1/\sqrt{3})$.  The curve $f(\TT)$ is a non-convex quadrilateral with vertices $1, x+iy, -1, x-iy$, which is obviously star-shaped about 0. 
\end{proof}

By subtracting a constant from $f$ in Proposition~\ref{star} we can achieve  $\hat f(0) = \hat f(1) = 0$; of course, $f(\TT)$ will no longer be star-shaped about $0$ then. On the other hand, for every circle embedding $\hat f(n)$ must be nonzero for some positive $n$. Indeed, a computation with Green's formula shows that the area enclosed by $f(\TT)$ is $\pi \sum_{n\in\ZZ} n|\hat f(n)|^2$, which implies $\hat f(n)\ne 0$ for some $n>0$. This raises the question: is there a fixed integer $N$ such that 
\begin{equation}\label{nonvanishing}
    \sum_{n=-N}^N |\hat f(n)| > 0 
\end{equation}
for every circle embedding $f$? By Hall's theorem, $N=1$ suffices when $f(\TT)$ is star-shaped about $0$. The main result of this section shows there is no universal $N$ for general circle embeddings.

\begin{theorem}\label{avoidable} For every $N\in\NN$ there exists an embedding $f\colon \TT\to\CC$ such that $\hat f(n)=0$ whenever $|n|\le N$. 
\end{theorem}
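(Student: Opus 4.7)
My plan is to impose $M$-fold rotational symmetry on $f$ with $M = N+2$, which automatically kills $\hat f(n)$ for every $n\in[-N,N]\setminus\{1\}$, and then to use the remaining freedom within the symmetric class to also arrange $\hat f(1) = 0$. The case $N = 0$ is trivial (subtract a constant), so I assume $N\ge 1$ and $M\ge 3$. Set $\omega = e^{2\pi i/M}$ and seek $f\colon\TT\to\CC$ satisfying the equivariance $f(\omega\zeta) = \omega f(\zeta)$. Substituting $\theta\mapsto\theta - 2\pi/M$ in the integral defining $\hat f(n)$ and applying the equivariance gives $\hat f(n) = \omega^{n-1}\hat f(n)$, so $\hat f(n) = 0$ unless $n\equiv 1\pmod M$. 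With $M = N+2$ the only such $n\in[-N,N]$ is $n = 1$, and the same symmetry reduces the condition $\hat f(1) = 0$ to the single scalar condition that $\int_0^{2\pi/M} f(e^{i\theta}) e^{-i\theta}\,d\theta = 0$ on the fundamental arc.

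I would then realize $f$ as a closed polygonal curve with $M$-fold symmetry, specified by $K$ free complex vertices $w_1,\dots,w_K$ inside the fundamental sector $\{e^{i\theta} : 0\le\theta\le 2\pi/M\}$ and extended to all of $\TT$ by equivariance. The above integral depends complex-linearly on $(w_1,\dots,w_K)$, so for $K\ge 2$ the equation $\hat f(1) = 0$ cuts out an affine subspace of complex codimension one in the configuration space, leaving a large family of admissible vertex configurations.

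The hard part will be exhibiting a point on this affine subspace that actually produces a simple polygon, since the Fourier-vanishing condition and the injectivity condition are of quite different natures. I would handle this by starting from an explicit ``safe'' simple configuration---for instance a regular $M$-gon augmented by several subdivision vertices placed well inside each edge, whose petals sit inside disjoint sectors of angular width $2\pi/M$ so that simplicity is robust under small vertex perturbations---and then perturbing just the subdivision vertices by a two-real-parameter adjustment whose effect on $\hat f(1)$ is an $\RR$-affine map $\RR^2\to\CC$ with nondegenerate linear part. For $K$ sufficiently large the magnitude of the adjustment required to cancel the initial value $\hat f(1) = c_0$ can be kept small compared with the width of the safety sector, so the perturbed polygon remains a Jordan curve while now also satisfying $\hat f(1) = 0$. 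If estimating the required adjustment directly proves awkward, I would fall back on a two-parameter degree/intermediate-value argument: construct a continuous two-parameter family of $M$-fold symmetric Jordan curves on whose boundary the map to $\hat f(1)\in\CC$ has nonzero winding number around $0$, forcing a zero in the interior.
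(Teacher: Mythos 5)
Your first step coincides exactly with the paper's: impose $(N+2)$-fold equivariance $f(\omega\zeta)=\omega f(\zeta)$, deduce $\hat f(n)=\omega^{n-1}\hat f(n)$, and reduce the whole theorem to producing one such embedding with $\hat f(1)=0$. That reduction is correct and is the right idea. The gap is in how you propose to kill $\hat f(1)$. The functional $f\mapsto\hat f(1)$ is $1$-Lipschitz for the uniform norm, since $|\widehat{\Delta f}(1)|\le\frac{1}{2\pi}\int_0^{2\pi}|\Delta f|\le\sup_\TT|\Delta f|$. Your base configuration is (a reparametrization of) a regular $(N+2)$-gon, for which $|\hat f(1)|=|c_0|$ is bounded below by an absolute constant (it tends to $1$ as $M\to\infty$; and for \emph{any} embedding with convex image $\hat f(1)\ne 0$ by Rad\'o--Kneser--Choquet). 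So any deformation reaching $\hat f(1)=0$ must move the map by at least $|c_0|$ in the sup norm; it cannot be ``small compared with the width of the safety sector,'' which is at most $2\pi/(N+2)\to 0$. Increasing $K$ does not buy leverage either: moving a subdivision vertex $w_j$ (together with its $M$ symmetric copies) by $\delta w_j$ changes $\hat f(1)$ by $\delta w_j\cdot\frac{M}{2\pi}\int_{I_j}\Lambda_j(\theta)e^{-i\theta}\,d\theta$, of modulus at most $\frac{M|I_j|}{2\pi}|\delta w_j|$; summing over the fundamental arc, $\sum_j M|I_j|\le 4\pi$, so perturbations of size $\varepsilon$ move $\hat f(1)$ by at most $2\varepsilon$ \emph{uniformly in $K$}. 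The claim ``for $K$ sufficiently large the required adjustment can be kept small'' is therefore false, and the perturbative argument collapses.

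The fallback degree argument is not carried out and conceals the same difficulty: you must exhibit a two-parameter family consisting entirely of Jordan curves whose boundary loop makes $\hat f(1)$ wind around $0$. The one natural loop with winding number one, $f_\alpha(\zeta)=f(e^{i\alpha}\zeta)$, is the generator of $\pi_1$ of the space of embeddings of $\TT$ into the plane, hence does not bound a disk of embeddings; building a usable family by hand is essentially the original problem. The paper resolves this by an explicit construction: $f(e^{i\theta})=\rho(k\theta)e^{i(\theta+h(k\theta))}$ with $g(\theta)=\arccos(\cos\theta)$, $\rho=1+\frac{2}{\pi}g$, $h=g+\frac{1}{\pi}g^2$, so that the radial factor $\rho$ is precisely the Jacobian $1+h'$ of the angular distortion and the substitution $s=t+t^2/\pi$ turns $\pi\hat f(1)$ into $\int_0^{2\pi}e^{is}\,ds=0$; injectivity is then a two-line check because $\rho$ determines $g$ determines $h$. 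The resulting curve is necessarily far from convex (not star-shaped about any point), which is exactly why a soft perturbation off a polygonal approximation to the circle cannot reach it.
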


\begin{proof} Let $k=N+2$. We will consider embeddings $f\colon \TT\to \CC$ with $k$-fold symmetry, that is 
\begin{equation}\label{kfold}
    f(e^{2\pi i/k}\zeta) = e^{2\pi i/k}f(\zeta),\quad \zeta\in\TT. 
\end{equation}
As a consequence of ~\eqref{kfold}, the Fourier coefficients of $f$ satisfy
\[
e^{ 2\pi n i / k} \hat f(n)  = e^{2\pi i/k}  \hat f(n), \quad n\in\ZZ, 
\]
which implies $\hat f(n) = 0$ for all $n$ such that $n\not\equiv 1 \bmod  k$. It remains to construct an embedding $f$ such that~\eqref{kfold} holds and $\hat f(1)=0$, which will assure  $\hat f(n)=0$ for $1-k < n < 1+k$. 

For $\theta\in\RR$ we define
\[
g(\theta) = \arccos(\cos \theta), \quad 
\rho(\theta) = 1 + \frac{2}{\pi} g(\theta),\quad 
h(\theta) = g(\theta) + \frac{1}{\pi} g(\theta)^2.
\]
Since $\rho$ and $h$ are $2\pi$-periodic and continuous, the function 
\begin{equation}\label{kfold1}
 f(e^{i\theta})= \rho(k\theta)e^{i(\theta + h(k\theta))}
\end{equation}
is well-defined and continuous on $\TT$. It has the $k$-fold symmetry~\eqref{kfold} by construction. 

Let us check that $f$ is injective. Suppose $f(e^{i\theta}) = f(e^{i\psi})$ for some $\theta,\psi \in \RR$. Then $\rho(k\theta)=\rho(k\psi)$, which by the definition of $\rho$ implies $g(k\theta)=g(k\psi)$, hence $h(k\theta)=h(k\psi)$. Comparing the arguments of $f(e^{i\theta})$  and  $f(e^{i\psi})$ we see that $\theta + h(k\theta) \equiv \psi + h(k\psi) \bmod 2\pi$.
Therefore, $\theta \equiv \psi \bmod 2\pi$ as required. 

Since $g$ is $2\pi$-periodic and even, it follows that 
\[
\begin{split}
\hat f(1) & = \frac{1}{2\pi} \int_0^{2\pi} 
\rho(k\theta)e^{i h(k\theta)}\,d\theta 
= \frac{k}{2\pi} \int_0^{2\pi/k} 
\rho(k\theta)e^{i h(k\theta)}\,d\theta \\
& = \frac{1}{2\pi} \int_0^{2\pi } 
\rho(t)e^{i h(t)}\,dt
= \frac{1}{\pi} \int_0^{\pi } 
\rho(t)e^{i h(t)}\,dt  
\end{split}
\]
But $g(t)=t$ for $t\in [0, \pi]$, which simplifies the above to 
\[
\begin{split}
\pi \hat f(1) & = \int_0^{\pi } 
\left(1+\frac{2}{\pi} t\right) e^{i (t + t^2/\pi)}\,dt 
\\ 
& =  \int_0^{2\pi }   e^{i s}\,ds = 0
\end{split}
\]
where we used $s=t + t^2/\pi $. The proof is complete.
\end{proof}

The example constructed in Theorem~\ref{kfold} is highly non-convex and is not star-shaped with respect to any point: see Figure~\ref{fig:threefold} which illustrates the case $k=3$.  

\begin{figure}[h]
    \centering
    \includegraphics[width=0.7\textwidth]{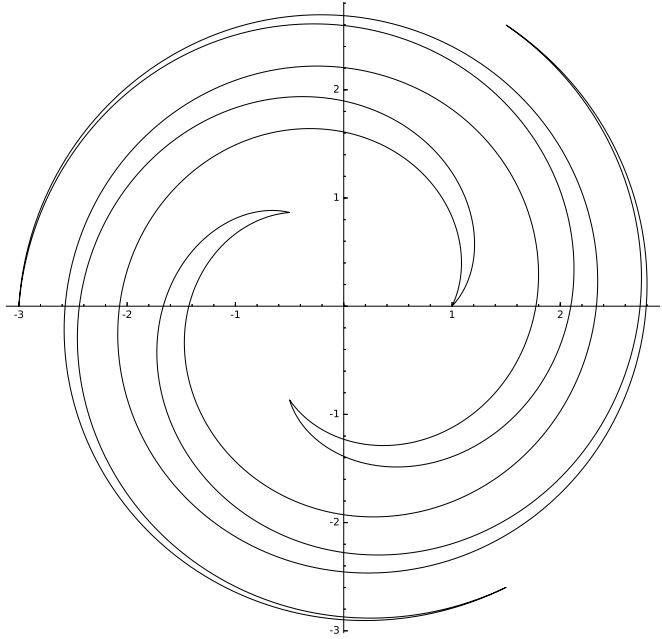}
    \caption{Three-fold symmetry with $\hat f(1) = 0 $}
    \label{fig:threefold}
\end{figure}

We obtain a nonvanishing result under the assumption of the horizontal convexity of $f(\TT)$, which is substantially weaker than convexity. Horizontal convexity naturally appears in the studies of harmonic maps~\cite{Duren}*{\S3.4}.

\begin{definition} A subset $A\subset \RR^2$ is \emph{horizontally convex} if its intersection with every horizontal line is connected (i.e., is an interval or empty set). Referring to a Jordan curve, we say that it is horizontally convex if its interior region is.   
\end{definition}

Observe that $f(\TT)$ is horizontally convex if and only if $\TT$ is the union of two arcs on each of which $\im f$ is monotone. These arcs connect the global maximum of $\im f$ on $\TT$ to its global minimum; all the extrema of $\im f$ are global.

The map in Proposition~\ref{star} shows that the horizontal convexity of $f(\TT)$ allows for $\hat f(1)=0$. We need to consider another Fourier coefficient to ensure at least one of them is nonzero.

\begin{theorem}\label{horconvex} If $f\colon\TT\to\CC$ is an embedding with horizontally convex image $f(\TT)$, then $|\hat f(-1)|+|\hat f(1)|>0$. If, in addition, $\im f$ is Lipschitz continuous, then 
\begin{equation}\label{horconvex1}
 |\hat f(-1)|+|\hat f(1)| \ge \frac{\delta}{2\pi}\left(1 - \cos \frac{\delta}{4L}\right)
\end{equation}
where $\delta = \max_\TT \im f - \min_\TT \im f$ and $L$ is the Lipschitz constant of $\im f$. 
\end{theorem}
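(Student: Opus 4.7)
The plan is to reduce the lower bound for $|\hat f(1)|+|\hat f(-1)|$ to one for $|\hat v(1)|$ with $v=\im f$, and then to exploit horizontal convexity to rewrite $\hat v(1)$ as an integral over horizontal slices of $f(\TT)$.

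\emph{Step 1: Reduction.} Since $\overline{\hat f(-1)} = \frac{1}{2\pi}\int_0^{2\pi}\overline{f(e^{i\theta})}e^{-i\theta}\,d\theta$, subtraction gives $\hat f(1)-\overline{\hat f(-1)} = \frac{1}{2\pi}\int_0^{2\pi}(f-\bar f)e^{-i\theta}\,d\theta = 2i\,\hat v(1)$, and the triangle inequality yields $|\hat f(1)|+|\hat f(-1)|\ge 2|\hat v(1)|$. It therefore suffices to bound $|\hat v(1)|$ from below.

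\emph{Step 2: Slice representation.} Horizontal convexity writes $\TT=\gamma_+\cup\gamma_-$ with $v$ monotone on each arc; for $y\in[v_{\min},v_{\max}]$ let $\psi_\pm(y)\in\gamma_\pm$ be the unique point with $v(\psi_\pm(y))=y$. As $|\hat v(1)|$ is invariant under rotation of the $\TT$-parameter, I would normalize so that the maximum of $v$ sits at $\theta=\eta$ and the minimum at $\theta=-\eta$ for some $\eta\in(0,\pi)$; then $\psi_+(y)\in[-\eta,\eta]$ and $\psi_-(y)\in[\eta,2\pi-\eta]$. Integration by parts in $\theta$, together with the change of variables $y=v(\theta)$ on each arc, gives
\[
2\pi\hat v(1) = -i\int_{v_{\min}}^{v_{\max}}\bigl(e^{-i\psi_+(y)}-e^{-i\psi_-(y)}\bigr)\,dy.
\]
Writing $e^{-i\psi_+(y)}-e^{-i\psi_-(y)} = 2i\sin\nu(y)\,e^{-i\mu(y)}$ with $\mu = (\psi_++\psi_-)/2\in[0,\pi]$ and $\nu = (\psi_--\psi_+)/2\in[0,\pi]$, the chord's real part is $2\sin\mu(y)\sin\nu(y)$, which is pointwise nonnegative by the normalization and vanishes only at $y\in\{v_{\min},v_{\max}\}$. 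This already establishes the qualitative claim $|\hat f(1)|+|\hat f(-1)|>0$, since $|2\pi\hat v(1)|\ge\int 2\sin\mu\sin\nu\,dy>0$.

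\emph{Step 3: Lipschitz estimate.} When $v$ is $L$-Lipschitz, applying the bound at each of the extrema $\pm\eta$ of $v$ produces
\[
\eta-\psi_+(y)\ge t/L,\quad \psi_+(y)+\eta\ge s/L,\quad \psi_-(y)-\eta\ge t/L,\quad 2\pi-\eta-\psi_-(y)\ge s/L,
\]
where $s=y-v_{\min}$ and $t=v_{\max}-y$. Summing in pairs forces $\mu(y)\in[\delta/(2L),\pi-\delta/(2L)]$ for every $y$ and $\nu(y)\in[t/L,\pi-s/L]$. On the middle slab $y\in[v_{\min}+\delta/4,v_{\max}-\delta/4]$ of length $\delta/2$ one has $s,t\ge\delta/4$, so both $\sin\mu(y)$ and $\sin\nu(y)$ are at least $\sin(\delta/(4L))$, whence the chord's real part is at least $2\sin^2(\delta/(4L))=1-\cos(\delta/(2L))$. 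Because the real part remains nonnegative outside the slab,
\[
|2\pi\hat v(1)|\ge \frac{\delta}{2}\bigl(1-\cos(\delta/(2L))\bigr),
\]
and combining with Step 1, while using monotonicity of $1-\cos$ on $[0,\pi]$ (noting $\delta\le \pi L$ is forced by Lipschitz continuity on the two arcs), yields
\[
|\hat f(1)|+|\hat f(-1)|\ge \frac{\delta\bigl(1-\cos(\delta/(2L))\bigr)}{2\pi}\ge \frac{\delta\bigl(1-\cos(\delta/(4L))\bigr)}{2\pi}.
\]
The main obstacle is the sign analysis in Step 2---verifying that the rotationally-normalized chord has nonnegative real part throughout $[v_{\min},v_{\max}]$---after which Step 3 reduces to elementary Lipschitz arithmetic.
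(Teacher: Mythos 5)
Your proposal is correct, and it takes a genuinely different route from the paper. The paper's argument is shorter: it uses the intermediate value theorem to find $\zeta_0$ with $\im f(\zeta_0)=\im f(-\zeta_0)$, normalizes so that $\zeta_0=1$ and $\im f(\zeta_0)=0$, and then observes that horizontal convexity forces $\im f(e^{i\theta})\sin\theta$ to have constant sign, so that $\re(\hat f(1)-\hat f(-1))=\frac1\pi\int\im f(e^{i\theta})\sin\theta\,d\theta$ cannot vanish; the Lipschitz bound comes from integrating $|\sin\theta|$ over an arc of length $\delta/(2L)$ around a point where $|\im f|\ge\delta/2$. Your version instead normalizes by placing the two extrema of $v=\im f$ symmetrically at $\pm\eta$ and rewrites $2\pi\hat v(1)$ as an integral over horizontal slices of the chord $e^{-i\psi_+(y)}-e^{-i\psi_-(y)}$, whose real part $2\sin\mu\sin\nu$ is nonnegative by the placement of $\psi_\pm$. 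This buys an exact structural formula for $\hat v(1)$ and, in the Lipschitz case, the slightly stronger constant $\frac{\delta}{2\pi}\bigl(1-\cos\frac{\delta}{2L}\bigr)$, which you then correctly weaken to the stated bound (the monotonicity step is legitimate since $\delta\le\pi L$ forces $\delta/(2L)\le\pi/2$). The computations in Steps 1--3 all check out: $\hat f(1)-\overline{\hat f(-1)}=2i\hat v(1)$, the range analysis $\mu,\nu\in[0,\pi]$, and the pairwise summation of the four Lipschitz inequalities. The only point you gloss over is that $\psi_\pm(y)$ need not be unique when $v$ is constant on a subarc (the curve may contain horizontal segments); this affects only countably many $y$ and is harmless for the $dy$-integral, but the integration by parts and change of variables should be read in the Riemann--Stieltjes sense since $v$ is merely continuous in the first half of the theorem.
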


\begin{proof} By the Borsuk-Ulam theorem (or just the intermediate value theorem), there exists $\zeta_0\in \TT$ such that $\im f(\zeta_0)=\im f(-\zeta_0)$. Replacing $f$ with $f(\alpha \zeta) + \beta$ for suitable $\alpha\in\TT$ and $\beta\in \CC$, we can arrange that $\zeta_0=1$ and $\im f(\zeta_0)=0$. Note that this replacement does not affect either side of~\eqref{horconvex1}.

By virtue of horizontal convexity, $\im f$ does not attain values of opposite sign on the upper half-circle; the same applies to the lower half-circle. Thus, $\im f(e^{i\theta}) \sin \theta$ does not attain values of opposite sign on $\TT$. Since
\begin{equation} \label{re}
\re\left(\hat f(1) - \hat f(-1)\right) =  \frac{1}{\pi} \int_0^{2\pi} \im f(e^{i\theta}) \sin \theta \,d\theta 
\end{equation}
we find that $\re\left(\hat f(1) - \hat f(-1)\right) \ne 0$ unless the integrand in ~\eqref{re} is identically zero. But the latter is impossible because the Jordan curve $f(\TT)$ cannot be contained in a line. 

It remains to prove~\eqref{horconvex1}. Pick $\zeta_1\in\TT$ such that
$|\im f|\ge \delta/2$. Let $\gamma$ be the arc of length $\delta/(2L)$ centered at $\zeta_1$. On this arc we have $|\im f|\ge \delta/4$ by the Lipschitz condition. Therefore, the absolute value of the integral in ~\eqref{re} is at least
\[
\frac{\delta}{4}\int_\gamma |\sin \theta|\,d\theta 
\ge \frac{\delta}{4}\int_{-\delta/(4L)}^{\delta/(4L)} |\sin \theta|\,d\theta = \frac{\delta}{2}\left(1 - \cos \frac{\delta}{4L}\right)
\]
proving~\eqref{horconvex1}. 
\end{proof}

Let us record an application of Theorem~\ref{horconvex} to minimal surfaces. 

\begin{corollary} Let $F\colon \DD\to\RR^3$ be a conformally parameterized minimal surface with a continuous extension to $\TT$. Let $f\colon\TT\to\RR^2$ be the composition 
of $F_{|\TT}$  with an orthogonal projection $\RR^3\to \RR^2$. If $f$ satisfies the assumptions of Theorem~\ref{horconvex}, then the Gaussian curvature $K$ of the minimal surface at $F(0)$ does not exceed  
\[
\frac{32 \pi^2 }{\delta^2\left(1 - \cos \frac{\delta}{4L}\right)^2}
\]
where $\delta$ and $L$ are as in Theorem~\ref{horconvex}.   
\end{corollary}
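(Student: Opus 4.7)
The plan is to combine Theorem~\ref{horconvex} with the classical Heinz-type estimate for the Gaussian curvature of a conformally parameterized minimal surface. After a rotation of $\RR^3$ I may assume that the projection is onto the first two coordinates, so that $h := F_1 + iF_2$ extends harmonically to $\DD$ with $h|_\TT = f$. Writing the harmonic decomposition $h = H + \overline{G}$ with $H, G$ holomorphic, the Taylor coefficients at $0$ give $\hat f(1) = H'(0)$ and $\hat f(-1) = \overline{G'(0)}$, so the quantity $|\hat f(1)|^2 + |\hat f(-1)|^2 = |H'(0)|^2 + |G'(0)|^2$ controls the first-order behavior of the projection.

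The main input is the standard Heinz--Hall curvature estimate (see \cite{Duren}*{\S10.3} together with the original sources \cite{Heinz52} and \cite{Hall}), asserting that
\[
|K(F(0))| \le \frac{4}{|\hat f(1)|^2 + |\hat f(-1)|^2}.
\]
This follows from the Weierstrass representation of $F$: the conformality and minimality force $\Phi'^2 = -H'G'$ with $F_3 = 2\re\Phi$, the conformal factor at $0$ is $\lambda(0) = |H'(0)| + |G'(0)|$, and the curvature admits the explicit form $|K(F(0))| = 4|g'(0)|^2/[\lambda(0)^2(1+|g(0)|^2)^2]$ for the Gauss map $g$. The Heinz--Hall analysis then yields the displayed bound; as a consistency check, Hall's sharp lower bound $|\hat f(1)|^2 + |\hat f(-1)|^2 \ge 27/(4\pi^2)$ for circle homeomorphisms recovers from it the curvature bound $16\pi^2/27$ for minimal graphs over $\DD$.

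To conclude, I would apply Theorem~\ref{horconvex} to obtain
\[
|\hat f(1)| + |\hat f(-1)| \ge \frac{\delta}{2\pi}\left(1 - \cos\frac{\delta}{4L}\right),
\]
and then use the elementary inequality $2(a^2 + b^2) \ge (a+b)^2$ to convert this into
\[
|\hat f(1)|^2 + |\hat f(-1)|^2 \ge \frac{\delta^2 \left(1-\cos\tfrac{\delta}{4L}\right)^2}{8\pi^2}.
\]
Substituting this lower bound into the Heinz--Hall estimate produces exactly the stated inequality $|K(F(0))| \le 32\pi^2 / [\delta^2(1-\cos(\delta/4L))^2]$.

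The only non-trivial ingredient is the Heinz--Hall curvature estimate itself, which I would invoke by reference to \cite{Duren}*{\S10.3} rather than rederive; the passage from there to the statement of the corollary is a routine arithmetic combination with Theorem~\ref{horconvex}.
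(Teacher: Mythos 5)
Your proposal is correct and follows essentially the same route as the paper: invoke the curvature estimate $K \le 4/(|\hat f(-1)|^2+|\hat f(1)|^2)$ from Duren (the paper cites the computation on p.~183), combine it with Theorem~\ref{horconvex} via the inequality $2(a^2+b^2)\ge (a+b)^2$, and substitute. The extra discussion of the Weierstrass representation is just the standard background behind the cited estimate and does not change the argument.
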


\begin{proof}
The computation in~\cite{Duren}*{p. 183} shows that 
\[
K \le \frac{4}{|\hat f(-1)|^2 + |\hat f(1)|^2}.
\]
On the other hand, Theorem~\ref{horconvex} implies
\[
|\hat f(-1)|^2 + |\hat f(1)|^2 \ge \frac{1}{2}(|\hat f(-1)|+|\hat f(1)|)^2 \ge 
\frac{\delta^2}{8\pi^2}\left(1 - \cos \frac{\delta}{4L}\right)^2
\]
which proves the claimed estimate.
\end{proof}

\begin{question} Is there a nonvanishing result of the form~\eqref{nonvanishing} for circle embeddings with a star-shaped image? On one hand, Hall's theorem~\cite{Hall}*{Theorem 2} gives $|\hat f(0)|+|\hat f(1)|>0$ if $f(\TT)$ is star-shaped about $0$; on the other, subtracting $\hat f(0)$ from the example in Proposition~\ref{star} shows that $|\hat f(0)|+|\hat f(1)|$ can vanish for general star-shaped embeddings.    
\end{question}

\begin{bibdiv}
\begin{biblist}

\bib{Ahlfors}{book}{
   author={Ahlfors, Lars V.},
   title={Complex analysis},
   edition={3},
   publisher={McGraw-Hill Book Co., New York},
   date={1978},
   pages={xi+331},
   isbn={0-07-000657-1},
}

\bib{Blaschke}{book}{
   author={Garcia, Stephan Ramon},
   author={Mashreghi, Javad},
   author={Ross, William T.},
   title={Finite Blaschke products and their connections},
   publisher={Springer},
   date={2018}
}

\bib{DouadyEarle}{article}{
   author={Douady, Adrien},
   author={Earle, Clifford J.},
   title={Conformally natural extension of homeomorphisms of the circle},
   journal={Acta Math.},
   volume={157},
   date={1986},
   number={1-2},
   pages={23--48},
   issn={0001-5962},
}

\bib{DurenHp}{book}{
   author={Duren, Peter L.},
   title={Theory of $H^{p}$ spaces},
   series={Pure and Applied Mathematics, Vol. 38},
   publisher={Academic Press, New York-London},
   date={1970},
   pages={xii+258},
}

\bib{Duren}{book}{
   author={Duren, Peter L.},
   title={Harmonic mappings in the plane},
   series={Cambridge Tracts in Mathematics},
   volume={156},
   publisher={Cambridge University Press, Cambridge},
   date={2004},
   pages={xii+212},
   isbn={0-521-64121-7},
}

\bib{Hall}{article}{
   author={Hall, R. R.},
   title={On an inequality of E. Heinz},
   journal={J. Analyse Math.},
   volume={42},
   date={1982/83},
   pages={185--198},
   issn={0021-7670},
}

\bib{Hall85}{article}{
   author={Hall, R. R.},
   title={A class of isoperimetric inequalities},
   journal={J. Analyse Math.},
   volume={45},
   date={1985},
   pages={169--180},
   issn={0021-7670},
}

\bib{Hall98}{article}{
   author={Hall, R. R.},
   title={The Gaussian curvature of minimal surfaces and Heinz' constant},
   journal={J. Reine Angew. Math.},
   volume={502},
   date={1998},
   pages={19--28},
   issn={0075-4102},
}

\bib{Heinz52}{article}{
   author={Heinz, Erhard},
   title={\"Uber die L\"osungen der Minimalfl\"achengleichung},
   language={German},
   journal={Nachr. Akad. Wiss. G\"ottingen. Math.-Phys. Kl. Math.-Phys.-Chem.
   Abt.},
   volume={1952},
   date={1952},
   pages={51--56},
}

\bib{HelsonSarason}{article}{
   author={Helson, Henry},
   author={Sarason, Donald},
   title={Past and future},
   journal={Math. Scand},
   volume={21},
   date={1967},
   pages={5--16 (1968)},
   issn={0025-5521},
}

\bib{Ransford}{book}{
   author={Ransford, Thomas},
   title={Potential theory in the complex plane},
   series={London Mathematical Society Student Texts},
   volume={28},
   publisher={Cambridge University Press, Cambridge},
   date={1995},
   pages={x+232},
   isbn={0-521-46120-0},
   isbn={0-521-46654-7},
}

\bib{Totik}{article}{
   author={Totik, Vilmos},
   title={Metric properties of harmonic measures},
   journal={Mem. Amer. Math. Soc.},
   volume={184},
   date={2006},
   number={867},
   pages={vi+163},
   issn={0065-9266},
}

\bib{Trapezoid}{article}{
   author={Trefethen, Lloyd N.},
   author={Weideman, J. A. C.},
   title={The exponentially convergent trapezoidal rule},
   journal={SIAM Rev.},
   volume={56},
   date={2014},
   number={3},
   pages={385--458},
   issn={0036-1445},
}

\bib{Weitsman}{article}{
   author={Weitsman, Allen},
   title={On the Fourier coefficients of homeomorphisms of the circle},
   journal={Math. Res. Lett.},
   volume={5},
   date={1998},
   number={3},
   pages={383--390},
   issn={1073-2780},
}
 
\end{biblist}
\end{bibdiv}

\end{document}